
\documentclass[11pt]{amsart}

\usepackage{amsmath,amsfonts,amsthm,amsopn,cite,mathrsfs}
\usepackage{epsfig,verbatim,color}
\usepackage{subfigure}
\setlength{\topmargin}{-8mm}
\setlength{\headheight}{8pt}
\setlength{\textheight}{220mm}  

\setlength{\oddsidemargin}{0pt}
\setlength{\evensidemargin}{0pt}
\setlength{\textwidth}{148 mm}   

\newcommand{\pana}{poly-analytic}


\def\cH{\mathcal{ H}}
\def\cB{\mathcal{ B}}

\def\cA{\mathcal{ A}}

\def\cO{\mathcal{ O}}

\newcommand{\fif}{if and only if}

\def\rd{\bR^d}
\def\cd{\bC^d}
\def\rdd{{\bR^{2d}}}

\def\lrd{L^2(\rd)}

\newcommand{\wid}{Wigner distribution}

\def\intrd{\int_{\rd}}

\def\R{\right)}
\def\l{\langle}
\def\r{\rangle}
\def\<{\left<}
\def\>{\right>}

\def\inv{^{-1}}

\def\mv1{M_v^1}


\newtheorem{lemma}{Lemma}[section]
\newtheorem{proposition}[lemma]{Proposition}
\newtheorem{theorem}[lemma]{Theorem}

\theoremstyle{definition}

\newtheorem{remark}{Remark}
\newtheorem{example}{Example}

\newtheorem{exmp}{Example}[section]

\newcommand{\stft}{short-time Fourier transform}

\def\Z{{\mathbb{Z}}}
\def\Q{{\mathbb{Q}}}
\def\R{{\mathbb{R}}}
\def\C{{\mathbb{C}}}

\def\ffi{{\varphi}}

\newcommand{\un}{\mathbf{1}}

\newcommand{\field}[1]{\mathbb{#1}}
\newcommand{\bR}{\field{R}}        
\newcommand{\bN}{\field{N}}        
\newcommand{\bZ}{\field{Z}}        
\newcommand{\bC}{\field{C}}        
\newcommand{\bQ}{\field{Q}}        
\newcommand{\bT}{\field{T}}        %

\renewcommand{\d}{\,\mathrm{d}}

\begin{document}
\begin{abstract}
We study the question under which conditions the zero set of a
(cross-) Wigner
distribution $W(f,g)$ or a \stft\ is empty. 
 This is the case when  both $f$ and $g$ are
generalized Gaussians, but we will construct  less obvious examples
consisting of exponential functions and their convolutions. The
results require elements from the theory of totally positive
functions, Bessel functions, and Hurwitz polynomials.
The question  of zero-free \wid s is also  related to Hudson's theorem for the
positivity of the Wigner distribution and to Hardy's uncertainty
principle.   We then construct a class of  step functions
$S$  so that  the \wid\ $W(f,\mathbf{1}_{(0,1)})$ always possesses a
zero  $f\in S \cap L^p$ for $p<\infty$, but may be zero-free for
$f\in S \cap L^\infty$.  The examples show
that  the question of zeros of the Wigner distribution may be quite
subtle and relate to several branches of analysis.  
\end{abstract}

\title[Zeros of the Wigner Distribution]{Zeros of the  Wigner Distribution and the Short-Time Fourier Transform}
\author{Karlheinz Gr\"ochenig}
\address{Faculty of Mathematics \\
University of Vienna \\
Nordbergstrasse 15 \\
A-1090 Vienna, Austria}
\email{karlheinz.groechenig@univie.ac.at}
\author{Philippe Jaming}
\address{Universit\'e de Bordeaux\\
Institut de Math\'ematiques de Bordeaux UMR 5251\\
Cours de la Lib\'eration\\
F 33405 Talence cedex, France}
\email{philippe.jaming@gmail.com}
\author{Eugenia Malinnikova}
\address{Department of Mathematical Sciences \\
  Norwegian University of Science and Technology\\
  Alfred Getz vei 1 \\
Trondheim, Norway}
\address{
School of Mathematics, Institute for Advanced Study \\
Einstein dr. 1, Princeton, NJ, 08540, USA}
\email{eugenia.malinnikova@ntnu.no}

\subjclass[2000]{}
\date{}
\keywords{Wigner distribution, \stft , Hudson's theorem, poly-analytic
function, convexity, Hurwitz polynomial, totally positive function}
\maketitle

\section{Introduction}
The aim of this paper is to investigate the zero set of the Wigner distribution of two functions
$f,g\in L^2(\bR )$, 
\begin{equation}
  \label{eq:2}
  W(f,g)(z) = \intrd f(x+\frac{t}{2}) \bar{g}(x-\frac{t}{2}) e^{-2\pi
    i \l\xi , t\r} \, dt  \,  ,  \qquad z=(x,\xi ) \in \rdd.
\end{equation}
More precisely, we are investigating whether this zero set can be empty.
Results here directly extend to other phase-space representations to which
the Wigner transform is closely related. These
include the ambiguity function and the \stft\ $V_gf(z)=\langle f,\pi (z) g\rangle_{L^2(\mathbb{R}^d)}$,
$\pi(z)g(t)=e^{2i\pi\xi\cdot t}g(t-x)$
(see \eqref{eq:1}). 

The zero set of the \stft\ is important in the study of the  generalized
Berezin quantization and the injectivity of a general  Berezin transform. 
The thesis of D.\ Bayer~\cite{bayer10}, partially published in ~\cite{BG15}, contains
the  following result (under a mild condition on $f, g \in \lrd$): \emph{If $V_{g}f(z) \neq 0$ for all $z \in
  \rdd $, then the mapping $ T \to \cB T, \cB (z ) = \langle T \pi
  (z ) g, \pi (z)f \rangle$ is one-to-one on the space of bounded
  operators on $\lrd $.} The function $(z,w) \mapsto \langle T \pi
  (z ) g, \pi (w )f  \rangle$ may be interpreted as a special
  integral kernel associated to the operator $T$. Bayer's statement
  asserts that $T$ is uniquely determined by the \emph{diagonal} of
  this kernel. The assertation is relevant, because theorems of this
  type were known only in the context of complex analysis, see,
  e.g.~\cite[Cor.\ 1.70]{folland89}.  Zero-free
  Wigner distributions occur  prominently  in ~\cite{KLSW12,LS18} in a
  similar context. It is therefore natural to ask for  examples that satisfy Bayer's
  assumptions: 
	
	\medskip
	
	\noindent{\bf Question.} {\em Determine (all) pairs $(f,g)$ such that $W(f,g)$ does not vanish.}
	
	\medskip

Clearly, if $f(t) = g(t) = e^{-\pi t^2}$
  is a Gaussian, then $W(f,g)(x,\xi ) = 2^d e^{-2\pi (x^2+\xi ^2)} \neq 0$
  everywhere. More generally, if  $q,\tilde q$ are quadratic polynomials such
  that $f=e^{-q},g=e^{-\tilde q} \in \lrd $ (generalized Gaussian),  then the
  \wid\ $W(f,g)$  is also zero-free. For a while, we have been unable to produce
	other pairs $(f,g)$ for which $W(f,g)$  is zero-free which lead us to believe that
	such pairs might not exist. The aim of this paper is to show that our belief was false by providing
       several examples of  \wid s without
        zeros. In doing so, we explore several connections with other areas  of
        analysis. Although we cannot offer a coherent theory of
        zero-free \wid s, we feel that the
        above question is of interest in itself and  warrants a deeper
        analysis. 
Part of the appeal of this problem comes from the connection to
different branches of analysis. In our quest for zero-free \wid s we touched  the
following topics in analysis. 

   (i) \emph{Total positivity.}  
    Most of our examples of zero-free \wid s (or ambiguity functions) are obtained from the
   basic example of the    one-sided exponential function
   $e^{-at}\mathbf{1}_{(0,\infty )}(t)$ by convolution. These happen to
   be totally positive functions, as follows from the fundamental
   classification of totally positive functions by
   Schoenberg~\cite{sch51}. However, not all totally positive
   functions possess a zero-free ambiguity functions, as can be seen
   from the symmetric exponential function $e^{-a|x|}$. 

   (ii) \emph{Bessel functions and Hurwitz polynomials.} The \wid\  of the functions
   $f_n(t) = t^ne^{-t} \mathbf{1}_{(0,\infty )}(t)$ contains certain
   Bessel functions as a factor. Moreover, to verify that  the
   ambiguity function of $f_n$ is zero-free, one needs to know that
   certain polynomials have all their zeros  in the 
   left half-plane, i.e., whether they are Hurwitz polynomials. We
   answer this question by recourse to 
   the classical properties of Bessel functions.

   (iii) \emph{Hudson's theorem.}  The non-vanishing of a \wid\ seems inherently related to
Hudson's theorem which asserts that a \wid\ is non-negative, \fif\ both
$f$ and $g$ are generalized Gaussians~\cite{hudson74}. Indeed,
variations of the proof of Hudson's theorem yield the following 
statement: \emph{If $\mathrm{Re}\, W(f,g)$ does not have any zeros,
  then $f$ and $g$ must be generalized Gaussians.} Similar statement
holds for the imaginary part of $W(f,g)$. For all other functions, the real and
imaginary parts of the \wid\  must  change sign, so the zero sets contain some
hypersurfaces and the question is when these hypersurfaces do not
intersect. 

(iv) \emph{Poly-analytic functions. } We also consider the case when
both functions $f$ and $g$ are finite linear combinations of Hermite 
functions. We are  lead to investigate the zeros of
certain \pana\ functions. Results of Balk~\cite{balkbook} yield some hints about the
zero set in this case. However, this direction does not seem to lead
to new examples of zero-free Wigner distributions. We conjecture that
no such examples exist in the class of  finite linear combinations of
Hermite 
functions.

(v)\emph{ Convexity and almost
periodicity.} As a last class of functions we consider pairs $f,g$
where one of the functions, say $g$,  is a characteristic function of
an interval.  We conjecture  that for any choice of $f\in L^2(\R)$ the
Wigner distribution $W(f,g)$ has zeros. To support our conjecture we
study the following particular case. 
Let $g=\un _{(0,1)}$ and choose $f$ to be a step function with
discontinuities on $\bZ \cup \alpha \bZ $ for irrational $\alpha$. If
$f\in L^p$ for $p<\infty$, the \wid\ $W(f, \un _{(0,1)})$ must always  have a
zero. However, we will construct  a delicate  example of a step function
$f$ in $L^\infty $ for which  the corresponding \wid\
$W(f,\un_{(0,1)})$ does not have any zeros. This example shows that
the non-existence of zeros of the \wid\  may be  quite
subtle and may depend sensitively on integrability or smoothness
properties of the function classes considered. 
In this part we use convexity and almost
periodicity as tools.

The article is organized as follows: in Section~\ref{sec:properties}
we list some of the properties of the Wigner distribution  and the
ambiguity function. 
We  list examples we found for which the Wigner distribution is
zero-free  in Sections~\ref{sec:example} and~\ref{sec:pos}.
In Section~\ref{sec:hudson} we study the
connection of zero-free \wid s to Hudson's theorem. Open questions and
interesting connections of the problem to other areas of analysis are
discussed in Sections~\ref{sec:pos}-\ref{sec:step}. 
In Section~\ref{sec:polyanal} we consider $f$ and $g$ 
to be finite linear combinations of Hermite functions and relate the
corresponding \wid\ to the theory of \pana\ functions. Section~\ref{sec:step} is
devoted to the case when $g$ is the characteristic function of an interval.

\section{Some properties of the Wigner transform}\label{sec:properties}

The Wigner transform is closely related to two other transforms in time-frequency analysis.
The first one is the short-time Fourier transform 
\begin{equation}
  \label{eq:1}
  V_gf(x,\xi ) = \intrd f(t) \overline{g(t-x)} e^{-2\pi i \l\xi , t\r} \,\mathrm{d}t  \, .
\end{equation}
A simple computation shows that
\begin{equation}
  \label{eq:c1}
  W(f,g)(x,\xi ) = e^{4\pi i x\cdot \xi } V_{Ig}f(2x, 2\xi ) \, ,
\end{equation}
with $Ig(t) = g(-t)$.
The second transform is the ambiguity function which is a slighlty
more symmetric version of the short-time Fourier transform
$$
A(f,g)=\intrd
f\left(t+\frac{x}{2}\right)\overline{g\left(t-\frac{x}{2}\right)}e^{-2i\pi\l\xi
  ,t\r}\d t \, .
$$
These transforms are related by the formulas 
$$
A(f,g)=e^{i\pi\xi x}V(f,g)\quad\mathrm{and}\quad A(f,g)(x,\xi)=\frac{1}{2}W(f,Ig)(x/2,\xi/2).
$$
Thus a pair $(f,g)$ for which $A(f,g)$ does not vanish also provides a
pair $(f,g)$ for which the short-time Fourier transform 
and a pair $(f,Ig)$ for which the Wigner-transform do not vanish.

Let us now list the invariance properties of the ambiguity
function. To do so, we recall that, 
for $z=(x,\xi)\in\bR^{2d}$,  the
phase-space  shift of $f\in \lrd $ along $z$ is defined  by $\pi
(z)f(t) = e^{2\pi i   \xi \cdot t} f(t-x)$. To every symplectic 
matrix $\mathcal{A}\in \mathrm{Sp}\, (d,\bR)$ one can then associate a
unitary operator $\mu (\cA ) $ acting on $\lrd $ such that
\begin{equation}
  \label{eq:n1}
  \pi (\cA z) = \mu (\cA ) \pi (z) \mu (\cA )^* \qquad \forall z\in \rdd
  \, .
\end{equation}
Here $\mu (\cA )$ is determined only up to a phase factor and is called
a metaplectic operator. The existence of metaplectic operators follows
already from the Stone-von Neumann theorem~\cite[Thm.~1.50]{folland89} and is
all we need here.  For
the much deeper aspects of these operators and the subtle construction
of the metaplectic representation we refer to \cite{folland89} and the
 recent book~\cite{Goss11}. For Theorem~\ref{wigreal} we need
the following formulas about the interaction between the \wid\ and the
metaplectic operators.


\begin{lemma}
Let $f,g\in \lrd $,  $w=(a,b),w'=(a',b')\in \rdd $, and $\cA \in \mathrm{Sp}\,
  (d,\bR )$. Then for all $z\in \rdd $
  \begin{eqnarray}
        W(\pi (w)f,\pi (w')g)(z) &=& e^{2i\pi\l w-w',z\r+i\pi\l b+b',a-a'\r}W(f,g)\left(z-\frac{w+w'}{2}\right) \, \label{eq:n2} \\
    W(\mu (\cA)f,\mu (\cA)g)(z) &=& W(f,g)(\cA ^*z) \, . \label{eq:n3}
  \end{eqnarray}
\end{lemma}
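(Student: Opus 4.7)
The plan is a direct computation for the first identity and a reduction-to-generators argument for the second.

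For \eqref{eq:n2}, I substitute $\pi(w)f(x+t/2) = e^{2\pi i b\cdot(x+t/2)}f(x+t/2-a)$ and the analogous expression for $\pi(w')g(x-t/2)$ into the definition of $W$, and pull the $t$-independent phase $e^{2\pi i(b-b')\cdot x}$ outside the integral. A change of variable $s = t - (a-a')$ then centers the arguments of $f$ and $g$ symmetrically around $x - (a+a')/2$, so that the remaining integral becomes $W(f,g)\bigl(x-(a+a')/2,\,\xi-(b+b')/2\bigr) = W(f,g)(z - (w+w')/2)$. Gathering the $s$-independent phase factors that the substitution produces (namely $e^{-2\pi i\xi\cdot(a-a')}$ from the Fourier kernel and $e^{i\pi(b+b')\cdot(a-a')}$ from the chirp-like factor in $t$) and combining them with $e^{2\pi i(b-b')\cdot x}$ yields the claimed prefactor.

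For \eqref{eq:n3}, observe first that both sides are sesquilinear in $(f,g)$, so any scalar phase ambiguity in $\mu(\cA)$ cancels and the statement is well-defined. It therefore suffices to verify the identity on a generating set of $\mathrm{Sp}(d,\bR)$ and extend by composition, using that $\mu(\cA\cB)$ agrees with $\mu(\cA)\mu(\cB)$ up to sign. I would use the standard generators: symplectic dilations $D_A = \mathrm{diag}(A, A^{-T})$, lifted as $f(t)\mapsto |\det A|^{-1/2}f(A^{-1}t)$; chirps $V_C$ with symmetric $C$, lifted as $f(t)\mapsto e^{i\pi Ct\cdot t}f(t)$; and the symplectic rotation $J$, whose lift is (a constant phase times) the Fourier transform. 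For the dilation, a change of variable $t\mapsto At$ in the Wigner integral immediately produces the rescaling of arguments. For the chirp, the two chirp factors combine to $e^{2\pi i Cx\cdot t}$ under the integral, producing the required shift of the frequency variable. For $J$, one uses the classical identity $W(\wh f,\wh g)(x,\xi) = W(f,g)(-\xi,x)$, which follows from Fubini and Plancherel applied to the expanded double Fourier integral.

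The main obstacle is the Fourier/rotation case: unlike the dilation and chirp, it is not a pointwise manipulation but requires a distributional Fourier argument. If this is undesirable, one can bypass it entirely by factoring $J$ as a product of dilations and chirps via the Iwasawa or Bruhat decomposition of $\mathrm{Sp}(d,\bR)$, and then recover the rotation case from the other two by the composition law.
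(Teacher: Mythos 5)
The paper does not prove this lemma at all; it simply cites Folland (Prop.~(1.92) for \eqref{eq:n2}, Prop.~(4.28) for \eqref{eq:n3}) and the references \cite{Goss11,book}. So you are supplying an argument that the paper outsources, not reproducing the paper's own proof.

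Your computation for \eqref{eq:n2} is correct. The three phase factors you isolate — the modulation part $e^{2\pi i(b-b')\cdot x}$, the translation of the Fourier kernel producing $e^{-2\pi i(a-a')\cdot\xi}$, and the residual chirp $e^{i\pi(b+b')\cdot(a-a')}$ — are exactly what the substitution $s=t-(a-a')$ yields, and the remaining integral is $W(f,g)$ evaluated at $\bigl(x-\tfrac{a+a'}{2},\,\xi-\tfrac{b+b'}{2}\bigr)$. (To match the printed formula you must read $\l w-w',z\r$ in \eqref{eq:n2} as the symplectic pairing $(b-b')\cdot x-(a-a')\cdot\xi$ rather than the Euclidean inner product; otherwise the first two factors do not recombine into the displayed exponent.)

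Your treatment of \eqref{eq:n3} is the standard generator-and-composition argument, and the observation that sesquilinearity of $W$ kills the phase ambiguity of $\mu$ is precisely what makes the reduction legitimate. The dilation and chirp cases are, as you say, one-line changes of variable under the integral. However, your closing remark is wrong: $J$ is \emph{not} a product of dilations $\mathrm{diag}(A,A^{-T})$ and lower-unipotent chirps $\bigl(\begin{smallmatrix}I&0\\ C&I\end{smallmatrix}\bigr)$, because those two families together generate only a block-lower-triangular parabolic subgroup of $\mathrm{Sp}(d,\bR)$, and $J$ is not block-triangular. Neither the Iwasawa decomposition (whose compact factor $U(d)$ already contains $J$) nor the Bruhat decomposition (whose Weyl representatives are built from $J$) lets you sidestep the rotation block. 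What one can do is write $J$ using the \emph{upper}-unipotent chirp $\bigl(\begin{smallmatrix}I&C\\ 0&I\end{smallmatrix}\bigr)$ as well; but its metaplectic lift is a Fresnel-type integral operator (a Fourier-conjugated chirp), so the non-pointwise step you were hoping to avoid merely reappears there. Your first route — prove $W(\hat f,\hat g)(x,\xi)=W(f,g)(-\xi,x)$ by a Plancherel--Fubini computation on Schwartz functions and extend by density and continuity of $W$ on $L^2\times L^2$ — is the right one and should stay.
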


See 
Prop.~(1.92) for \eqref{eq:n2} (with slightly different
normalizations), and Prop.~(4.28) for \eqref{eq:n3}, or the general references~\cite{Goss11,book}. 

As a consequence, if $W(f,g)$ does not vanish, so does $W(\pi (w)f,\pi (w')g)$ and $W(\mu (\cA)f,\mu (\cA)g)$.
Note that when $\cA=J=\begin{pmatrix}0&-I\\I&0\end{pmatrix}$, $\mu
(\cA)f$ is the Fourier transform of $f$. Thus every example of a
zero-free \wid\ yields a whole class of related examples.

We will use a further property of the ambiguity function. For $\xi\in\bR$, let us write $M_\omega f(t)=\pi(0,\omega)f(t)=e^{-2i\pi\omega t}f(t)$.
Then
$$
A(f,g)(x,\xi)=\bigl(M_{-\xi/2}f\bigr)*\bigl(M_{\xi/2}g^*\bigr)(x)
$$
where $g^*(t)=\overline{g(-t)}$. As a consequence
\begin{align}
A(f_1*f_2,g_1*g_2)(x,\xi)&=\bigl(A(f_1,g_1)(\cdot,\xi)*_1A(f_2,g_2)(\cdot,\xi)\bigr)(x)
  \notag \\
&:=\int_\R A(f_1,g_1)(t,\xi)A(f_2,g_2)(x-t,\xi)\,\mathrm{d}t. \label{eq:conv}
\end{align}

Finally we cite the version of Hardy's uncertainty principle for the
\wid\ taken from ~\cite{GZ01}. 
\begin{lemma}\label{hardy}
  (i) If $|W(f,f)(z)| \leq C e^{-2\pi |z|^2}  = C' W(h_0,h_0)(z)$ for all
  $z\in \rdd$, then $f = c h_0$.

  (ii) Let $\gamma \in \lrd $ be a generalized Gaussian and $\psi \in
  \lrd $. If $ W(\psi ,\psi )(z) \leq W(\gamma, \gamma )(z)$ for all
  $z\in \rdd$, then $\psi = c \gamma $.
\end{lemma}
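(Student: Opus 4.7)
My plan is to deduce the lemma from the classical Hardy uncertainty principle by exploiting the marginal projections of the Wigner distribution. I would prove part (ii) first; (i) then follows from (ii) applied with $\gamma=\sqrt{C'}\,h_0$, since the absolute value bound $|W(f,f)(z)|\leq C'W(h_0,h_0)(z)$ entails the one-sided bound $W(f,f)(z)\leq W(\sqrt{C'}h_0,\sqrt{C'}h_0)(z)$.

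For (ii) the starting point is the pair of marginal identities
\[
\int_{\rd} W(\psi,\psi)(x,\xi)\,\d\xi = |\psi(x)|^2, \qquad \int_{\rd} W(\psi,\psi)(x,\xi)\,\d x = |\wh{\psi}(\xi)|^2.
\]
Since $\gamma$ is a generalized Gaussian, $W(\gamma,\gamma)$ is a strictly positive Gaussian on $\rdd$, so the hypothesis $W(\psi,\psi)\leq W(\gamma,\gamma)$ furnishes a Gaussian majorant for the positive part of $W(\psi,\psi)$ on every slice $\{x\}\times\rd$ and every slice $\rd\times\{\xi\}$. The nonnegativity of the respective marginals then controls the negative part as well, and integrating the pointwise inequality yields $|\psi(x)|\leq|\gamma(x)|$ and $|\wh\psi(\xi)|\leq|\wh\gamma(\xi)|$ pointwise.

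To apply the classical Hardy principle I would next reduce to the model case $\gamma=h_0$ using the invariance relations \eqref{eq:n2} and \eqref{eq:n3} of the preceding lemma. Every $L^2$ generalized Gaussian can be written as $\gamma=c\,\mu(\cA)\pi(w)h_0$ for suitable $c\in\bC$, $w\in\rdd$, and $\cA\in\mathrm{Sp}(d,\bR)$; setting $\tilde\psi=c^{-1}\mu(\cA)^{-1}\pi(w)^{-1}\psi$ transforms the hypothesis into $W(\tilde\psi,\tilde\psi)(z)\leq W(h_0,h_0)(z)=2^d e^{-2\pi|z|^2}$, and the previous step yields $|\tilde\psi(x)|\leq 2^{d/4}e^{-\pi|x|^2}$ and $|\wh{\tilde\psi}(\xi)|\leq 2^{d/4}e^{-\pi|\xi|^2}$. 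The equality case of the multidimensional Hardy uncertainty principle then forces $\tilde\psi$ to be a scalar multiple of $h_0$; undoing the reduction gives $\psi=c'\gamma$.

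The step I expect to be most delicate is the passage from the pointwise inequality $W(\psi,\psi)\leq W(\gamma,\gamma)$ to the integrated inequalities $|\psi|\leq|\gamma|$ and $|\wh\psi|\leq|\wh\gamma|$. Since $W(\psi,\psi)$ need not be absolutely integrable and can take negative values, both the marginal identities themselves and the legitimacy of integrating the inequality term by term require careful justification, which can be achieved by a preliminary smoothing (convolving with a narrow Gaussian to obtain a nonnegative spectrogram) and a limit argument controlled by the Gaussian majorant.
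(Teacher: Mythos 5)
Your route is genuinely different from the paper's, though it meets the paper halfway. The paper proves (i) by citing the Wigner-distribution form of Hardy's uncertainty principle from Gr\"ochenig--Zimmermann (GZ01, Lemma 3.3 and Remark 3.4) as a black box, and then obtains (ii) from (i) by the metaplectic reduction $\gamma=\pi(w)\mu(\cA)h_0$; you instead prove (ii) directly --- using the same metaplectic reduction to $h_0$, but then passing to the $x$- and $\xi$-marginals to obtain $|\tilde\psi|\le 2^{d/4}e^{-\pi|\cdot|^2}$ and $|\wh{\tilde\psi}|\le 2^{d/4}e^{-\pi|\cdot|^2}$ and invoking the classical function-side Hardy theorem --- and then derive (i) trivially. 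In effect you unpack the cited result rather than quote it. This buys something concrete: your argument works visibly with the one-sided hypothesis $W(\psi,\psi)\le W(\gamma,\gamma)$, because a marginal argument never needs a lower bound on $W(\psi,\psi)$. The paper's displayed chain in the proof of (ii) writes $|W(\psi,\psi)(z)|\le C\,W(\gamma,\gamma)(z)$, a two-sided strengthening which is not directly contained in the one-sided hypothesis of (ii), so your version is cleaner on this point. The cost is exactly the technical issue you flagged: the marginal identity $\int W(\psi,\psi)(x,\xi)\,\d\xi=|\psi(x)|^2$ and the term-by-term integration of the pointwise inequality do require justification for a generic $\psi\in L^2$. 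The remedy you propose is the right one: convolving with $W(\phi_\varepsilon,\phi_\varepsilon)$ produces nonnegative spectrograms with $|V_{\phi_\varepsilon}\psi|^2\le|V_{\phi_\varepsilon}\gamma|^2$ (convolution with a nonnegative Gaussian preserves the inequality), whose $\xi$-marginal is $|\psi|^2*|\phi_\varepsilon|^2\le|\gamma|^2*|\phi_\varepsilon|^2$, and letting $\varepsilon\to 0$ gives $|\psi|\le|\gamma|$ and, symmetrically, $|\wh\psi|\le|\wh\gamma|$ almost everywhere, which is enough for Hardy. With that regularization spelled out, your proof is correct and slightly more self-contained than the paper's.
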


\begin{proof}
  (i) is Hardy's uncertainty principle for the \wid\ as proved in  \cite{GZ01}
  (Lemma 3.3 and Remark 3.4). 

  (ii) follows from (i). We recall  that every generalized
Gaussian $\gamma= e^{Q(x)}\in L^2(\bR ^d)$ with a
quadratic polynomial $Q$ can be written in the form $\gamma= \pi (w) \mu
(\cA ) h_0$, where $h_0(x) = 2^{d/4} e^{\pi x\cdot x} $ is the
normalized Gaussian, see e.g., \cite{folland89}, Prop.~(4.73). 
  Let $\psi _0\in \lrd $ such that
  $\psi = \pi (w) \mu (\cA) \psi _0$. Using \eqref{eq:n2} and
  \eqref{eq:n3}, we obtain
  \begin{align*}
    |W(\psi _0,\psi _0)(\cA ^*(z-w)) | &=   | W(\psi ,\psi )(z)| \\
    &\leq C W(\gamma,\gamma) (z) =   C'  W(h_0,h _0)(\cA ^*(z-w)) \qquad
      z\in \rdd \, .
  \end{align*}
  By (i) this implies that $\psi _0 = c h_0$ and consequently $ \psi =
  c \gamma $. 
\end{proof}

\section{Examples}\label{sec:example}

In this section we give several  examples of pairs of functions for
which the ambiguity function  do not vanish.

\begin{example}\label{ex:gauss}
The first example is the Gaussian function. Write
$\gamma_a(t)=e^{-a\pi t^2}$ with $\mathrm{Re} \, (a)>0$. Then a direct, well known
computation
shows that, 
if $\mathrm{Re} \, (a),\mathrm{Re} \, (b)>0$,
\begin{equation}
\label{eq:ambgaus}
A(\gamma_a,\gamma_b)=(a+b)^{-1/2}\exp \Big( -\pi \frac{abx^2+\xi^2}{a+b}+i\frac{a-b}{a+b}x\xi\Big)
\end{equation}
so that $A(\gamma_a,\gamma_b)$ does not vanish. 

Using tensorisation and invariance properties of the Wigner distribution,
it follows that, if $f,g\in L^2(\bR ^d)$ are generalized Gaussians $f= e^{Q}$ (for
some quadratic polynomial), then $A(f,g)$ does not vanish. 
\end{example}

A second family of examples is given by the one-sided exponential. 

\begin{example}\label{ex:cauchy}
For $a>0$, let $\eta_a(t)=e^{-at}\mathbf{1}_{(0,+\infty)}(t)$. Then, for $a,b>0$
\begin{equation}
\label{eq:ambexp}
A(\eta_a,\eta_b)(x,\xi)=\exp\left(-\frac{a-b}{2}x-\frac{a+b}{2}|x|\right)\frac{e^{-i\pi\xi|x|}}{a+b+2i\pi\xi}
=\eta_{a,b}(x)\frac{e^{-i\pi\xi|x|}}{a+b+2i\pi\xi}
\end{equation}
where $\eta_{a,b}(x)= (b-a)\inv \, \eta _a \ast I\eta _b(x) =
\displaystyle\begin{cases}e^{-ax}&\mbox{when }x\geq 0\\
  e^{bx}&\mbox{when }x<0\end{cases}$. See also~\cite{janssen96}. 
In particular, 
$$
A(\eta_a,\eta_a)(x,\xi)=\frac{e^{-(a+i\pi\xi)|x|}}{2(a+i\pi\xi)} 
$$
does not have any zeros. 
Since $A(If,Ig)(x,\xi ) = A(f,g)(-x,-\xi )$, the ambiguity function of
the one-sided exponential $I\eta _b, b>0$, is
\begin{equation}
  \label{eq:n3a}
A(I\eta_b,I\eta_b)(x,\xi)=\frac{e^{-(b-i\pi\xi)|x|}}{2(b-i\pi\xi)} \, ,
\end{equation}
and is also zero-free. 

However, the ambiguity function $A(\eta_a,I\eta_b)(x,\xi)=0$ for
$x>0$, because in this case  $t\mapsto\eta_a(t+x/2)$ and $t\mapsto
I\eta_b(t-x/2)$ have disjoint supports. 
\end{example}

\begin{example}\label{ex:exp}
 Next we compute the ambiguity function of convolutions
  of exponentials of the form   $f=\eta_{a}*\eta_{b}$ or $f=\eta_{a}\ast I\eta_{b}$ 
 for $a,b>0$. For a compact formula we write $\eta _{-a} = I\eta _a$. 
  \begin{lemma}
    Let $a,b >0, a\not=b$. Then
    \begin{equation}
      \label{eq:n1a}
      A(\eta _a \ast \eta _{\pm b} , \eta _a \ast \eta _{\pm b}
      )(x,\xi ) = 
      \frac{1}{2(b^2-a^2) + 4\pi i (\pm b-a)\xi } \Big(\frac{e^{-(a+\pi i \xi
          |x|)}}{a+\pi i \xi} -\frac{e^{-(b\pm \pi i \xi )
          |x|}}{b \pm \pi i \xi} \Big) \, .
    \end{equation}
Furthermore, $A(\eta _a \ast \eta _{\pm b} , \eta _a \ast
    \eta _{\pm b})$ has no zeros. 
  \end{lemma}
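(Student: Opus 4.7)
The plan is to apply the convolution identity \eqref{eq:conv} with $f_1=g_1=\eta_a$ and $f_2=g_2=\eta_{\pm b}$, so that
\[
A(\eta_a\ast\eta_{\pm b},\eta_a\ast\eta_{\pm b})(x,\xi)=\int_{\mathbb R} A(\eta_a,\eta_a)(t,\xi)\,A(\eta_{\pm b},\eta_{\pm b})(x-t,\xi)\,\mathrm dt.
\]
From Example~\ref{ex:cauchy} I know $A(\eta_a,\eta_a)(t,\xi)=\dfrac{e^{-(a+\pi i\xi)|t|}}{2(a+\pi i\xi)}$ and, via \eqref{eq:n3a}, $A(\eta_{-b},\eta_{-b})(t,\xi)=\dfrac{e^{-(b-\pi i\xi)|t|}}{2(b-\pi i\xi)}$, while the $+b$ case is the analogue with $a$ replaced by $b$. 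Set $\alpha=a+\pi i\xi$ and $\beta=b\pm\pi i\xi$; since $\operatorname{Re}\alpha=a>0$ and $\operatorname{Re}\beta=b>0$, the computation reduces to evaluating $I(x)=\int_{\mathbb R}e^{-\alpha|t|}e^{-\beta|x-t|}\,\mathrm dt$.

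The integral $I$ is a standard piecewise computation: for $x\ge0$ split $\mathbb R=(-\infty,0]\cup[0,x]\cup[x,\infty)$, integrate three elementary exponentials, and combine over the common denominator $\beta^{2}-\alpha^{2}$. The resulting identity
\[
I(x)=\frac{2\alpha\beta}{\beta^{2}-\alpha^{2}}\left(\frac{e^{-\alpha|x|}}{\alpha}-\frac{e^{-\beta|x|}}{\beta}\right)
\]
extends to all $x\in\mathbb R$ by the symmetry $t\mapsto-t$. Multiplying by the prefactor $1/(4\alpha\beta)$ and using
\[
\beta^{2}-\alpha^{2}=(b^{2}-a^{2})+2\pi i\xi(\pm b-a)
\]
produces exactly the formula \eqref{eq:n1a}.

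For the zero-freeness, the denominator $2(b^{2}-a^{2})+4\pi i(\pm b-a)\xi$ does not vanish: its imaginary part is zero only when $\xi=0$ (noting $\pm b-a\neq0$, since in the $+$ case $a\neq b$ by hypothesis and in the $-$ case $a+b>0$), and then its real part $2(b^{2}-a^{2})$ is nonzero again by $a\neq b$. For the bracketed factor, suppose
\[
\frac{e^{-(a+\pi i\xi)|x|}}{a+\pi i\xi}=\frac{e^{-(b\pm\pi i\xi)|x|}}{b\pm\pi i\xi},
\]
that is $\dfrac{b\pm\pi i\xi}{a+\pi i\xi}=e^{(a-b)|x|}e^{\pi i\xi(1\mp1)|x|}$. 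Taking absolute values gives
\[
\sqrt{\frac{b^{2}+\pi^{2}\xi^{2}}{a^{2}+\pi^{2}\xi^{2}}}=e^{(a-b)|x|},
\]
and the two sides lie on opposite sides of $1$: if $a>b$ the left-hand side is $<1$ while the right-hand side is $\ge 1$, and vice versa if $a<b$. Equality forces $a=b$, contradicting the hypothesis, so no such $x,\xi$ exist and the ambiguity function is zero-free.

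The computational step in the first paragraph is routine, and the main (mild) obstacle is just keeping track of the complex parameters $\alpha,\beta$ carefully enough to arrive at the compact form in \eqref{eq:n1a}; the modulus argument for non-vanishing is then clean and uniform in the sign $\pm$.
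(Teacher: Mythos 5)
Your proposal is correct and takes essentially the same route as the paper: convolve via \eqref{eq:conv}, compute $\int_{\mathbb{R}}e^{-\alpha|t|}e^{-\beta|x-t|}\,\mathrm{d}t$ piecewise for $\alpha=a+\pi i\xi$, $\beta=b\pm\pi i\xi$, and then rule out zeros by a modulus comparison between the two exponential terms. The only (cosmetic) difference is that you treat both sign choices at once through a single parameter $\beta$ and a single absolute-value inequality, whereas the paper works with $v=b+\pi i\xi$ and $\bar v$ separately; the substance of both steps is identical.
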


  \begin{proof}
    To facilitate calculations, we set $\psi _u (x) = e^{-u|x|}$ for
    $u\in \bC $. Given $a,b>0$ set $u=a+\pi i \xi , v=b+\pi i \xi
    $. In this notation, \eqref{eq:ambexp} with $a=b$ becomes
$$
A(\eta _a, \eta _a)(x,\xi) = \frac{1}{2u} \, \psi _u(x) \qquad \text{
  and } \qquad A(\eta _{-b}, \eta _{-b})(x,\xi) =
\frac{1}{2\overline{v}} \, \psi _{\overline{v}}(x) \, .
$$
To find the ambiguity function of $f=\eta _a \ast \eta _b$, we use
\eqref{eq:conv} and find that 
\begin{equation}
  \label{eq:n2a}
A(f,f)(x,\xi ) =\frac{\psi_{u}\ast \psi _{v}(x)}{4uv} \, .  
\end{equation}
Then  for $x\geq0$ we obtain 
\begin{eqnarray*}
\psi_u*\psi_v(x)&=&\int_{-\infty}^0 e^{ut-v(x-t)}\,\mathrm{d}t
+\int_0^xe^{-ut-v(x-t)}\,\mathrm{d}t+\int_x^{+\infty}e^{-ut+v(x-t)}\,\mathrm{d}t\\
&=&\frac{e^{-vx}}{u+v}+\frac{e^{-ux}-e^{-vx}}{v-u}
    +\frac{e^{-ux}}{u+v}\\
&=& e^{-ux}\Big(\frac{1}{v+u} + \frac{1}{v-u}\Big) +
    e^{-vx}\Big(\frac{1}{v+u} - \frac{1}{v-u}\Big) \\
  &=& \frac{2}{v^2-u^2} \Big(ve^{-ux} -ue^{-vx}\Big) \, .
\end{eqnarray*}
Since $\psi _u$ is even, so is $\psi _u \ast \psi _v$, and then \eqref{eq:n2a} yields
\begin{equation}
  \label{eq:n4}
A(f,f)(x,\xi) = \frac{1}{2(v^2-u^2)} \Big( \frac{e^{-u|x|}}{u}
-\frac{e^{-v|x|}}{v} \Big) \, , 
\end{equation}
from which \eqref{eq:n1a} follows by substituting the values for $u$ and
$v$.

For the case $f=\eta _a \ast \eta _{-b}$, according to  \eqref{eq:n3a}
we only have to  replace $v$ by
$\overline{v}$ in the above derivation, whence \eqref{eq:n1a} follows
for $f$.

Now assume that $A(\eta _a \ast \eta _b, \eta _a \ast \eta _b)(x,\xi )
= 0$. Since $a\neq b$, the denominator $v^2-u^2 \neq 0$ and
\eqref{eq:n4} implies  that
\begin{equation}
  \label{eq:n5}
  \frac{e^{-u|x|}}{u} = \frac{e^{-v|x|}}{v} \, , 
\end{equation}
or
$$
e^{(b-a)|x|} = e^{(v-u)|x|} = \frac{u}{v} = \frac{a+\pi i \xi}{b+\pi i
  \xi} \, .
$$
For $b>a$, then left-hand side is greater than $1$, whereas the
modulus of the right-hand side is less than $1$, which is
impossible. Thus $A(\eta _a \ast \eta _b, \eta _a \ast \eta _b) $ does 
not have any zeros. Likewise for the case $b<a$. 

Similarly, if $A(\eta _a \ast \eta _{-b}, \eta _a \ast \eta _{-b})(x,\xi )
= 0$, then 
\begin{equation}
  \frac{e^{-u|x|}}{u} = \frac{e^{-\overline{v}|x|}}{\overline{v}} \, , 
\end{equation}
or
\begin{equation}
 \label{eq:n5b}
e^{(b-a - 2\pi i \xi) |x|} = e^{(\overline{v}-u)|x|} = \frac{u}{\overline{v}} = \frac{a+\pi i \xi}{b-\pi i
  \xi} \, .  
\end{equation}
 Again for $b>a$, the modulus of the left-hand side is greater than $1$, whereas the
modulus of the right-hand side is less than $1$, which is not
possible. Thus $A(\eta _a \ast \eta _{-b}, \eta _a \ast \eta _{-b}) $ does
not have any zeros. 
      \end{proof}

      The case $a=b$ and $\eta _a \ast \eta _{ a} = t e^{-at}
      \mathbf{1}_{(0,\infty )} $ is treated below. The case of  $\eta _a
      \ast \eta _{ -a} = e^{-a|x|}$ is particularly interesting, as
      its Fourier transform is the Poisson kernel. In this case
      \eqref{eq:n1a} holds for $\xi \neq 0$ and requires some easy 
      modification for $\xi = 0$. If $\xi \neq 0$, then \eqref{eq:n5b}
      turns into 
$$
e^{ - 2\pi i \xi |x|} = e^{(\overline{v}-u)|x|} = \frac{u}{\overline{v}} = \frac{a+\pi i \xi}{a-\pi i
  \xi} \, .  
$$
This equation is solvable, and therefore the ambiguity function of
$e^{-a|x|}$ must have zeros. In Theorem~\ref{tm-even} we will give a
different explanation for this fact.

\end{example}
It is also possible to derive a formula for the ambiguity function of
higher convolutions of the form $\eta _{a_1} \ast \eta _{a_2} \ast
\dots \ast \eta _{a_n}$ for distinct values of  $a_j> 0$. These
expressions are much more complicated,  and the analysis of 
their zeros requires a separate investigation. 

\begin{example}\label{ex:conv}
 Let $a>0$ and $f(t)=\eta_a*\eta_a(t)=t\eta_a(t)$. A direct computation shows that
$$
A(f,\eta_a)(x,\xi)=\frac{x\mathbf{1}_{(0,+\infty)}(x)e^{-a(1+i\pi\xi)x}}{2(a+i\pi\xi)}
+\frac{e^{-a(1+i\pi\xi)|x|}}{4(a+i\pi\xi)^2}.
$$
Obviously, $A(f,\eta_a)(x,\xi)$ can not vanish if $x\leq 0$,  while if
$x\geq 0$, it is enough to notice that $2x(a+i\pi\xi)+1$ 
can not vanish.
\end{example}

\begin{example}\label{ex:Gamma}
	 Let $\gamma_{a,b}(t)=\exp(at-be^t)$, where $a,b>0$, then $\gamma_{a,b}\in L^2(\R)$ and we claim that $A(\gamma_{a,b},\gamma_{a,b})$ does not vanish.
A straightforward computation  (with the substitution $s=2b\cosh
\tfrac{x}{2} e^t$) shows that
\begin{multline*}
A(\gamma_{a,b},\gamma_{a,b})(x,\xi)=\left(2b\cosh\frac{x}{2}\right)^{-2a+2\pi i\xi}
\int_0^\infty s^{2a-1-2\pi i \xi}e^{-s}ds=\\
\left(2b\cosh \frac{x}{2}\right)^{-2a+2\pi i\xi}\Gamma(2a-2\pi i\xi).
\end{multline*}
The  gamma function is well defined in the right half-plane and does not have zeros, so $A(\gamma_{a,b},\gamma_{a,b})$ never vanishes. A similar computation shows that 
\[A(\gamma_{a,b},\gamma_{c,d})=e^{(a-c)x/2}(be^{x/2}+de^{-x/2})^{-(a+c)+2\pi\xi}\Gamma(a+c-2\pi\xi)\]
and it has no zeros either. 
\end{example}

One can now use the invariance properties of the ambiguity function to
obtain more examples. For instance, using the Fourier transform, 
one obtains  the following examples. 

\begin{example}
Let us now write, for $a\not=0$, $c_a(t)=\frac{1}{a+2i\pi t}$ and note that $c_a$ is the Fourier transform of
$\eta_a$ when $a>0$. 
It follows that 
$A(c_a,c_b)$ does not vanish for $a,b >0$. 
 Furthermore, except for the Poisson kernel
$\frac{1}{a^2+4\pi ^2
  t^2}$ all rational functions of the form $c_a(t)
c_b(\pm t) = \frac{1}{a+2i\pi t}\frac{1}{b\pm 2i\pi t}$ possess a
non-vanishing ambiguity function, because  they are the Fourier
transform of $\eta _a \ast \eta _b$ or of $\eta _a \ast I\eta _b$.  
\end{example}

\textbf{Outlook and context.}  All functions in 
Examples \ref{ex:cauchy}-\ref{ex:conv} are
special cases of \emph{totally positive} functions. A function $g:\bR \to
\bR$ is called totally positive (or a Polya frequency function), if
for every choice of real numbers $x_1 < x_2 < \dots  <  x_n$ and
$y_1< y_2 < \dots  <  y_n$ with  $n\in \bN $ the inequality
$$
\det \big( g(x_j - y_k)\big)_{j,k=1, \dots , n} \geq 0 \, .
$$
According to a fundamental
characterization of Schoenberg~\cite{sch51} the Fourier
transform of a  totally
positive function in $L^1(\bR )$ can be factored as follows:
\begin{equation}
  \label{eq:1a}
\hat{g} (\xi ) = e^{-\gamma \pi \xi ^2} e^{2\pi i \delta \xi } \prod _{j=1}^\infty (1+2\pi i
\delta _j \xi )\inv e^{2\pi i \delta _j\xi }  \, ,
\end{equation}
with   $\delta , \delta _j\in \bR $, $0\leq  \gamma$,   and $ \sum _{j=1}^\infty \delta _j^2 < \infty $ and
$\delta \in \bR $. In this section we have analyzed the ambiguity
function of the simplest totally positive functions, corresponding to
the factorization $\hat{g}(\xi ) =  \prod _{j=1}^L (1+2\pi i
\delta _j \xi )\inv $ for $L=1$ and $L=2$. These are just the
functions $\eta _a$,  $\eta _a \ast \eta _b$, $\eta _a \ast I\eta
_b$.

At this time the  role of  totally positive
functions in the classification of zero-free ambiguity
functions remains unclear. Among all explicit examples considered,
only  the symmetric exponential $e^{-a|x|}$  has an ambiguity
function with zeros. More generally, the ambiguity function of  every
\emph{even} totally positive  function has a zero. See also
Theorem~\ref{tm-even} below.

\section{Totally positive functions and Hurwitz polynomials}
\label{sec:pos}
In this section we study a different class of totally positive
functions and its relation to Bessel functions and Hurwitz
polynomials. 
Let \[f_n(t) = t^n e^{-t} \mathbf{1}_{(0,\infty )}(t) = t^n \eta _1(t).\] This is a totally
positive function with Fourier transform $\hat{f}(\xi ) = (1+2\pi i
\xi )^{-n}$ ~\cite{sch51} and can also be written as $n! \, \eta _1
\ast \dots \ast \eta _1 $ ($n$ times).
Its ambiguity function can be explicitly calculated as follows. Let
$A_n$ be the polynomial of degree $n$ defined as follows:
\begin{equation}
  \label{eq:tp1}
A_n(z) = \sum _{k=0}^n \binom{n}{k} \, (n+k)! \,  z^{n-k}  
\end{equation}
\begin{lemma}
The ambiguity function of  $f_n(t) = t^n e^{-t} \mathbf{1}_{(0,\infty )}(t)$
is given as 
\begin{equation}
  \label{eq:tp2}
  A(f_n,f_n)(x,\xi ) = e^{-|x|(1+i\pi \xi )} \frac{1}{(2+2\pi i \xi
    )^{2n+1}} A_n\big( |x| (2+2\pi i \xi )\big) \, .
\end{equation}
  \end{lemma}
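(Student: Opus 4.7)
The plan is to compute $A(f_n,f_n)(x,\xi)$ directly from the definition, reducing the integral to a one-sided Laplace-type integral and then expanding with the binomial theorem.

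First I would write, using $f_n(s)=s^n e^{-s}\mathbf 1_{(0,\infty)}(s)$,
$$
A(f_n,f_n)(x,\xi)=\int_\R (t+x/2)^n(t-x/2)^n\,\mathbf 1_{(|x|/2,\infty)}(t)\,e^{-2t}e^{-2\pi i\xi t}\,\mathrm dt,
$$
since $f_n(t+x/2)\overline{f_n(t-x/2)}$ is nonzero exactly when $t>|x|/2$, and $f_n$ is real. Because $(t+x/2)^n(t-x/2)^n=(t^2-x^2/4)^n$ depends on $x$ only through $x^2$ and the domain of integration only through $|x|$, the ambiguity function is an even function of $x$. Hence it suffices to establish the formula for $x\geq 0$ and then insert $|x|$ in the final expression.

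Next I would shift variables by $s=t-x/2$ (so that $t+x/2=s+x$), pulling the factor $e^{-x(1+i\pi\xi)}$ out of the integral to get
$$
A(f_n,f_n)(x,\xi)=e^{-x(1+i\pi\xi)}\int_0^\infty s^n(s+x)^n e^{-(2+2\pi i\xi)s}\,\mathrm ds.
$$
Then I would expand $(s+x)^n=\sum_{k=0}^n\binom{n}{k}x^{n-k}s^k$, exchange sum and integral (trivially, the sum is finite), and evaluate each resulting term with the standard Gamma identity
$$
\int_0^\infty s^{n+k}e^{-\lambda s}\,\mathrm ds=\frac{(n+k)!}{\lambda^{n+k+1}},
$$
which is valid since $\Re(2+2\pi i\xi)=2>0$. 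Collecting the factors of $(2+2\pi i\xi)$ and pulling out the common denominator $(2+2\pi i\xi)^{2n+1}$ leaves precisely the sum defining $A_n\bigl(x(2+2\pi i\xi)\bigr)$, after which the evenness remark produces \eqref{eq:tp2}.

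The computation is entirely routine; the only mildly delicate point is handling the sign of $x$, which is disposed of at the outset by the evenness observation, so there is no real obstacle.
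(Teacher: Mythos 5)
Your proof is correct and follows essentially the same route as the paper: reduce to a one-sided integral over $t>|x|/2$, expand the polynomial binomially, and evaluate the Laplace transform termwise. The only difference is cosmetic: you use the shift $s=t-x/2$ after reducing to $x\geq 0$ by evenness, whereas the paper rescales via $u=t/|x|-1/2$ (which forces a separate check at $x=0$); your version handles $x=0$ uniformly, a small tidiness advantage.
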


  \begin{proof}
    We rewrite the ambiguity function as follows:
    \begin{align*}
      A(f_n,f_n)(x,\xi ) &= \int _{\bR} \big(t+\tfrac{x}{2}\big)^n e^{-(t+x/2)}
      \mathbf{1}_{\bR ^+}(t+\tfrac{x}{2}) (t-\tfrac{x}{2}\big)^n e^{-(t-x/2)}
    \mathbf{1}_{\bR ^+}(t-\tfrac{x}{2}) \, e^{-2\pi i \xi t} \,  \d t  \\
      &= \int _{|x|/2} ^\infty \big(t+\tfrac{x}{2}\big)^n 
 (t-\tfrac{x}{2}\big)^n e^{-(2 + 2\pi i \xi ) t} \,  \d t \, , 
    \end{align*}
    since we need both $t>x/2$ and $t>-x/2$. For $x\neq 0$, we use  the change of variables
    $$
    u=\frac{t}{|x|} - \frac{1}{2}
    $$
    (so that $t= |x|u +|x|/2$) to rewrite   the integral as 
    \begin{align*}\label{eq:An}
A(f_n,f_n)(x,\xi ) &= |x|^{2n+1} \, \int _0^\infty u^n (u+1)^n
                 e^{-(|x|u+|x|/2) (2+2\pi i \xi )} \, \d u \\
      &= e^{-|x|(1+\pi i \xi )} |x|^{2n+1} \int _0^\infty u^n (u+1)^n
        e^{-u |x| (2+2\pi i \xi )} \, \d u \, .
    \end{align*}
    The integral is just the Laplace transform of the polynomial 
		\[u^n
    (u+1)^n = \sum _{k=0}^n \binom{n}{k} \, u^{n+k}\] at $\zeta  = |x|
    (2+2\pi i \xi )$.
    Since $\mathcal{L}\{u^k\}(z) = \int _0^\infty u^k e^{-uz}\, du = k!
    z^{-k-1}$, we obtain
    $$
    \int _0^\infty u^n (u+1)^n  e^{-u z} \, du \, =\sum _{k=0}^n
    \binom{n}{k}\,  (n+k)! \,  z^{-n-k-1} = z^{-2n-1} A_n (z)  \, .
    $$
    For $x=0$, the ambiguity function is
    $$
    A(f_n,f_n)(0,\xi ) = \int _0^\infty t^{2n} e^{-t(2+2\pi i \xi)} \d
    t = \mathcal{L}\{u^{2k}\} (2+2\pi i \xi) = (2n)! \, (2+2\pi i \xi
    )^{-2n-1} \, .
    $$
 With $\zeta  = |x|     (2+2\pi i \xi )$ and  $|x|^{2n+1} \zeta ^{-2n -1} =  (2+2\pi i \xi
    )^{-2n-1}$, the final formula for the ambiguity function on $\bR
    ^2$  is
    \begin{align*}
      A(f_n,f_n)(x,\xi ) &= e^{-|x|(1+i\pi \xi )} \frac{1}{(2+2\pi i \xi
    )^{2n+1}} A_n\big( |x| (2+2\pi i \xi )\big) \, .
    \end{align*}
  \end{proof}

  To show that the ambiguity function of $f$ does not vanish, we need to
know that $A_n$ has no zeros on the right half-plane. A polynomial
whose roots all have negative real part, is called a  Hurwitz
polynomial or a  stable polynomial. For small $n$ one can check the
stability of $A_n$ directly. For larger $n$,  one could check the
stability of $A_n$ in principle with the Routh-Hurwitz criterion \cite[pages 225-230]{Gant}. For
this purpose, we associate 
to a polynomial $p(t)=a_0t^n+a_1t^{n-1}+\cdots+a_n$, $a_0>0$
 the $n\times n$ Hurwitz matrix
$$
\cH=(a_{2j-i})_{1\leq i,j\leq n}=\begin{pmatrix}a_1&a_3&a_5&a_7&\cdots&0\\
a_0&a_2&a_4&a_6&\cdots&0\\
0&a_1&a_3&a_5&\cdots&0\\
0&a_0&a_2&a_4&\cdots&0\\
\vdots&\vdots&\vdots&\vdots&&\vdots\\
0&0&0&0&\cdots&a_n
\end{pmatrix}
$$
(with  the convention $a_j=0$ if $j<0$ or $j>n$). Then $p$ is a Hurwitz polynomial if and only if
all principal minors of $\cH$ are positive~\cite[pages 225-230]{Gant}.
This criterion implies that the condition $a_ja_{j+1} -
a_{j-1}a_{j+2}>0$ is necessary for the stability of $p$. On the other
hand, there exists an optimal value $\gamma>0$, $\gamma \approx
2.1479$, such that $a_ja_{j+1} > \gamma
a_{j-1}a_{j+2}$ is sufficient for $p$ to be stable~\cite{KV08}. One can check that
the polynomials $A_n$ satisfy the necessary condition, but fail to
satisfy the sufficient condition.  The decisive hint comes from
experimenting with 
Mathematica. It turns out that
\begin{equation}
  \label{eq:n6}
  A_n(z) = \pi ^{-1/2} n! e^{z/2} z^{n+1/2} K_{n+1/2}\big(\frac{z}{2}\big) \, ,
\end{equation}
where $K_\nu , \nu \in \bR $ is the Macdonald function or the  modified Bessel function of the
second kind~\cite{encycl,macdonald99,watson95}. For $\nu = n+1/2$ it is the Laurent polynomial
$$
K_{n+1/2}(z)  = \Big(\frac{\pi}{2z}\Big)^{1/2} e^{-z} \sum _{k=0}^n
\frac{(n+k)!}{k! (n-k)! } (2z)^{-k} \, ,
$$
whence \eqref{eq:n6} follows immediately. One can also perform the change of variables $s=2t/|x|$ in the first computation of $A(f_n,f_n)$ and directly obtain that
\begin{align*}
      A(f_n,f_n)(x,\xi ) &= \int _{|x|/2} ^\infty \big(t+\tfrac{x}{2}\big)^n 
 (t-\tfrac{x}{2}\big)^n e^{-(2 + 2\pi i \xi ) t} \,  \d t\\
&=
\left(\frac{|x|}{2}\right)^{2n+1}\int_1^\infty(s^2-1)^ne^{-(1+\pi i \xi)|x|s}\, \d s\\&=\frac{n!}{\sqrt{\pi}}\left(\frac{|x|}{(1+\pi i \xi)}\right)^{n+1/2}K_{n+1/2}(|x|(1+\pi i \xi))
\end{align*}
 Furthermore, for arbitrary
$\nu \geq 0$, $K_\nu $ has roots only when $\mathrm{Re}\, z <0$. In
view of \eqref{eq:n6} $A_n$ is therefore a Hurwitz polynomial~\cite{watson95}. 

By combining the accumulated knowledge about Bessel functions, we have
therefore proved the following result.

\begin{theorem} 
The polynomial $A_n$ is a Hurwitz polynomial.  Thus the  ambiguity function of $f_n(t) = t^n e^{-t}
  \mathbf{1}_{(0,\infty)}(t)$ does not have any zeros.  
\end{theorem}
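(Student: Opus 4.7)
The plan is to reduce the zero-freeness of $A(f_n,f_n)$ to a statement about zeros of a single complex polynomial, and then to read off that statement from the classical theory of modified Bessel functions. Starting from formula \eqref{eq:tp2}, the prefactor $e^{-|x|(1+i\pi\xi)}(2+2\pi i\xi)^{-2n-1}$ never vanishes, so the ambiguity function is zero-free on $\mathbb{R}^2$ if and only if $A_n(z)\neq 0$ whenever $z$ lies on the curve $z=|x|(2+2\pi i\xi)$, $(x,\xi)\in\mathbb{R}^2$. This curve sweeps out the entire closed right half-plane $\{\mathrm{Re}\,z\geq 0\}$, and at $z=0$ one has $A_n(0)=(2n)!\neq 0$. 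Hence it suffices to show that $A_n$ is Hurwitz.

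The key step is to recognise $A_n$ as a disguised Bessel polynomial. Instead of using the Laplace transform as in the lemma, I redo the integral for $A(f_n,f_n)$ with the change of variables $s=2t/|x|$ (for $x\neq 0$) to get
\begin{equation*}
A(f_n,f_n)(x,\xi)=\Bigl(\frac{|x|}{2}\Bigr)^{2n+1}\int_1^\infty (s^2-1)^n e^{-(1+\pi i\xi)|x|s}\,\mathrm{d}s.
\end{equation*}
This integral matches exactly the classical integral representation
\begin{equation*}
K_\nu(w)=\frac{\sqrt{\pi}\,(w/2)^\nu}{\Gamma(\nu+\tfrac{1}{2})}\int_1^\infty (s^2-1)^{\nu-1/2}e^{-ws}\,\mathrm{d}s, \qquad \mathrm{Re}\,w>0,\ \nu>-\tfrac{1}{2},
\end{equation*}
taken at $\nu=n+\tfrac{1}{2}$ and $w=(1+\pi i\xi)|x|$. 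Comparing the result with \eqref{eq:tp2} produces the identity \eqref{eq:n6}, namely $A_n(z)=\pi^{-1/2}n!\,e^{z/2}z^{n+1/2}K_{n+1/2}(z/2)$. The branch ambiguities are only apparent, since $K_{n+1/2}(w)=\sqrt{\pi/(2w)}\,e^{-w}$ times a Laurent polynomial in $w$, so the right-hand side is a genuine polynomial in $z$ of degree $n$.

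With this identification in hand, Hurwitz stability of $A_n$ reduces to the classical fact that for $\nu>-\tfrac{1}{2}$ the modified Bessel function $K_\nu(w)$ has no zeros in the closed right half-plane (see Watson). A zero of $A_n$ at some $z_0$ with $\mathrm{Re}\,z_0\geq 0$ would force $K_{n+1/2}(z_0/2)=0$, a contradiction, so all zeros of $A_n$ lie in $\{\mathrm{Re}\,z<0\}$. The non-vanishing of $A(f_n,f_n)$ on $\mathbb{R}^2$ then follows immediately. I expect the main obstacle to be bookkeeping: matching constants correctly in the Bessel integral representation and, if only the open right half-plane version of the non-vanishing theorem is at hand, handling purely imaginary $w$ separately by inspecting the explicit Laurent-polynomial form of $K_{n+1/2}$.
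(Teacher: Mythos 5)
Your proposal is correct and follows essentially the same route as the paper: both hinge on identifying $A_n$ with the Bessel-type expression $\pi^{-1/2}n!\,e^{z/2}z^{n+1/2}K_{n+1/2}(z/2)$ and then invoking the classical fact (Watson) that $K_\nu$ has no zeros in the closed right half-plane, so that $A_n$ is Hurwitz and the ambiguity function of $f_n$ is zero-free.

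Two small remarks. First, the curve $z=|x|(2+2\pi i\xi)$ for $(x,\xi)\in\mathbb{R}^2$ sweeps out the \emph{open} right half-plane together with the origin, not the full closed right half-plane, since the argument $\arctan(\pi\xi)$ stays in $(-\pi/2,\pi/2)$; this is harmless for your reduction because proving the stronger Hurwitz property (no zeros with $\mathrm{Re}\,z\geq 0$) certainly suffices, and the theorem asserts that stronger property anyway. Second, your method of establishing the Bessel identity --- the substitution $s=2t/|x|$ followed by matching against the classical integral representation of $K_\nu$ --- is exactly the second derivation the paper records; the paper additionally verifies the identity algebraically from the explicit Laurent-polynomial form of $K_{n+1/2}$, which is the same observation you make to dispel branch concerns. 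The proofs are substantively identical.
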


Using the invariance of the problem under metaplectic operators, we
see that  the function $c_a^n (t) = \frac{1}{(a+2i\pi t)^n}$ has a
zero-free  ambiguity function. This follows  because 
the Fourier transform of $x^n e^{-ax}\mathbf{1}_{(0,\infty)}(x) = \eta
_a \ast \dots \ast  \eta _a$. 

\begin{remark}
{\rm Martin Ehler proposed a different proof that $A_n$ is a Hurwitz
  polynomial. He observed that  $A_n$ can be interpreted as the  hypergeometric
  function $_1F_1 (−n, −2n; z)$, whose zeros are known to lie in the
  left halfplane.} 
\end{remark}

\section{Connection to Hudson's Theorem}\label{sec:hudson}

We next discuss the connection of non-vanishing \wid s to Hudson's
theorem~\cite{hudson74}. This theorem characterizes all non-negative  \wid s as
follows:

\begin{theorem}[Hudson-Lieb]\label{hud}
 Let  $f,g\in \lrd $. Then  $W(f,g) \geq  0$, \fif\  $f= c g$ for
 $c\geq 0$ and $g$ is a generalized Gaussian. In this case
 $W(f,g)(x,\xi ) >0$ for all $x,\xi \in \bR $. 
\end{theorem}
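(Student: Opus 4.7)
The plan is to dispatch the sufficiency direction by direct computation: if $g = e^{Q}$ is a generalized Gaussian and $f = cg$ with $c \geq 0$, then $W(f,g) = c\,W(g,g)$, and \eqref{eq:ambgaus} combined with the covariance relations \eqref{eq:n2}--\eqref{eq:n3} shows that $W(g,g)$ is a strictly positive Gaussian on $\rdd$. This simultaneously establishes the strict positivity claim.

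For necessity, assuming $f,g \neq 0$, I would reduce to the classical Hudson theorem (the case $f = g$ proved in~\cite{hudson74}) by a Bargmann--Fock argument. The first step is to apply the convolution identity
\[ \bigl(W(f,g) \ast W(h_0,h_0)\bigr)(z) = C_d\, V_{h_0}f(z)\,\overline{V_{h_0}g(z)}, \]
valid because $h_0$ is an even real Gaussian. The hypothesis $W(f,g) \geq 0$ together with $W(h_0,h_0)(z) > 0$ makes the left-hand side non-negative, so
\[ V_{h_0}f(z)\,\overline{V_{h_0}g(z)} \geq 0 \qquad \text{for every } z \in \rdd . \]
Passing to the Bargmann transforms $Bf, Bg$, which are entire functions of order at most $2$ on $\bC^d$ satisfying $V_{h_0}f(x,-\xi) = e^{-\pi|z|^2/2}\,\overline{Bf(z)}$ (and likewise for $g$) in the complex variable $z = x + i\xi$, this translates into
\[ \overline{Bf(z)}\,Bg(z) \geq 0 \qquad \text{for every } z \in \bC^d . \]

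The crux is to deduce that the meromorphic ratio $h(z) = Bg(z)/Bf(z)$ is a non-negative real constant. A local expansion at each zero of $Bf$ shows that the sign condition forces $Bg$ to vanish there to at least the same order, so $h$ extends to an entire function on $\bC^d$. On the open dense set where $Bf \neq 0$ one has $h(z) = Bg(z)\,\overline{Bf(z)}/|Bf(z)|^2 \in [0,\infty)$, hence $h$ maps $\bC^d$ into $[0,\infty)$ everywhere by continuity. An entire function whose image lies in $[0,\infty)$ is constant (restrict to any complex line and invoke the open mapping theorem), so $Bg = c\,Bf$ with $c \geq 0$, and injectivity of the Bargmann transform yields $g = cf$ in $\lrd$. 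The original Hudson theorem applied to $W(f,f) \geq 0$ then identifies $f$, and hence $g$, as a generalized Gaussian.

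The step I expect to be the main obstacle is the careful bookkeeping of normalizations and phase factors in the convolution identity and the Bargmann transform, so that the pointwise sign relation transfers cleanly between $V_{h_0}f\cdot \overline{V_{h_0}g}$ and $\overline{Bf}\cdot Bg$. Once those details are in place, the ratio-plus-open-mapping argument is considerably shorter than Hudson's original Hadamard-factorization approach and, unlike the latter, handles the two-function version without extra work.
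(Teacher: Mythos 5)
Your sufficiency direction matches what the paper already supplies through Example~\ref{ex:gauss} and the covariance relations~\eqref{eq:n2}--\eqref{eq:n3}. For necessity, you take a genuinely different route. The paper only \emph{reviews} the classical $f=g$ argument (a zero-free Bargmann transform in Fock space is $e^{h}$, and Carath\'eodory's inequality forces $h$ to be a quadratic polynomial) and refers to Lieb~\cite{lieb90} for the bilinear version, without proving the reduction. You supply exactly that missing reduction: from $\overline{Bf(z)}\,Bg(z)\geq 0$ you argue that the ratio $h=Bg/Bf$ has matching vanishing orders along the zero set of $Bf$, hence is entire, takes values in $[0,\infty)$, and is therefore constant by the open mapping theorem on complex lines; this gives $g=cf$ with $c\geq 0$, after which the diagonal Hudson theorem (taken as a black box) finishes. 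What this buys you is a self-contained derivation of the two-function version from the one-function version, which the paper does not carry out. Two small points to tighten. First, the paper's identity~\eqref{eq:7b} reads $V_{h_0}f(\bar z)=e^{\pi i x\cdot\xi}Bf(z)e^{-\pi|z|^2/2}$, with $Bf$ rather than $\overline{Bf}$ as in your display; this costs nothing because ``$w\geq 0$'' is a conjugation-invariant condition on $w\in\bC$, but the formula should be corrected. Second, for $d>1$ the zero set of $Bf$ is a complex hypersurface rather than a discrete set, so ``local expansion at each zero'' must be understood via restriction to complex lines $\ell$ with $Bf|_\ell\not\equiv 0$: the one-variable order-matching and open-mapping argument then shows $h|_\ell$ is constant, and since this holds on enough lines through any fixed point where $Bf\neq 0$, the entire $h$ is globally constant.
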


As the connection to Hudson's theorem is not accidental, it is
instructive to review its proof. For full details, we refer to 
 \cite{hudson74,janssen84,book}, the
cited bilinear version is due to Lieb~\cite{lieb90}. 

One first uses the convolution relation $W(f,f) \ast W(g,g) =
|V_gf|^2$ for the \wid . Using  the Gaussian window $h_0(t) = 2^{d/4}
e^{-\pi t\cdot t}$ with \wid\ $W(h_0,h_0)(z) = 2^d e^{-2\pi |z|^2}$,
 we may   translate the statement about the zeros of the
 \wid\ into a statement
about entire functions in the Bargmann-Fock space.
The Bargmann transform of $f$ is defined to be 
\begin{equation}
  \label{eq:7}
  Bf(z) = 2^{d/4} e^{-\pi z^2/2} \intrd f(t) e^{-\pi  t \cdot  t} e^{2\pi
     t\cdot  z } \d t  \qquad z\in \cd \, ,
\end{equation}
and it is  connected to the \stft\ with respect to the Gaussian $h_0$
via 
\begin{equation}
  \label{eq:7b}
  V_{h_0}f(\bar z ) = e^{\pi i  x\cdot \xi } Bf(z) e^{-\pi |z|^2/2} \, .
\end{equation}
Here we identify the point $(x,\xi )\in \rdd $ with $z=x+i\xi \in \cd
$. If $W(f,f) \geq 0$, then $W(f,f) \ast W(h_0,h_0) (z) >0$ for all $z\in 
\rdd$ and therefore  
$$
|V_{h_0}f(\bar z)|^2 = |Bf(z)|^2 e^{-\pi |z|^2} \neq 0 \qquad \text{
  for all } z\in \cd \, .
$$
  If $V_{h_0}f(z) \neq  0$ for all $z\in \rdd $, then the entire
    function $Bf$ does not have zeros in $\cd $ 
and thus is of the form $Bf(z) = e^{h(z)}$ for
some entire function $h$. However, since $V_{h_0}f$ is bounded, it
follows that $|Bf(z)| = e^{\mathrm{Re}\, h(z)} \leq C e^{\pi
  |z|^2/2}$. Thus
$\mathrm{Re}\, h(z) \leq C'+ \pi
  |z|^2/2$. 
By  Carath\'eodory's inequality $h$ is then a quadratic polynomial.  Inversion of the
    Bargmann transform yields the conclusion that
    $f$ must be a generalized Gaussian. 



The connection between zero-free \wid s and  Hudson's theorem now
becomes apparent.  
\begin{theorem} \label{tm-even}
  If $f\in L^2(\R^d) $ and $W(f,f)(x,\xi )  \neq 0$ for all $x,\xi \in
  \bR $, then $f$ is a
  generalized Gaussian. Likewise, if $V_{If}f(x,\xi )  \neq 0$ for all $x,\xi \in
  \bR $, then $f$ is a   generalized Gaussian.
\end{theorem}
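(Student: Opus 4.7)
The plan is to reduce the zero-free hypothesis to the non-negativity hypothesis of the Hudson--Lieb Theorem~\ref{hud} by exploiting two elementary observations: $W(f,f)$ is real-valued, and its integral equals $\|f\|_2^2$.

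First I would recall (or briefly verify by the change of variable $t \mapsto -t$ in \eqref{eq:2}) that $W(f,f)$ is real-valued and continuous on $\mathbb{R}^{2d}$. Combined with Moyal's identity $\int_{\mathbb{R}^{2d}} W(f,f) = \|f\|_2^2$, the function $W(f,f)$ must take strictly positive values somewhere (assuming $f \not\equiv 0$, the only nontrivial case). The hypothesis says $W(f,f)$ never vanishes, so by continuity and connectedness of $\mathbb{R}^{2d}$ it has constant sign, hence
\[
W(f,f)(x,\xi) > 0 \qquad \text{for all } (x,\xi)\in \mathbb{R}^{2d}.
\]
At this point Theorem~\ref{hud} applies directly and forces $f$ to be a generalized Gaussian.

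For the second assertion I would invoke the identity \eqref{eq:c1}, namely
\[
W(f,f)(x,\xi) = e^{4\pi i x \cdot \xi}\, V_{If}f(2x,2\xi).
\]
The exponential prefactor is never zero, so $V_{If}f$ is zero-free on $\mathbb{R}^{2d}$ if and only if $W(f,f)$ is zero-free on $\mathbb{R}^{2d}$. The statement therefore reduces immediately to the first part.

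Essentially there is no hard step: the whole content is packaged into Hudson--Lieb, and what remains is the (short) passage from ``no zero'' to ``pointwise positive''. If there is any subtle point, it is the observation that real-valuedness of $W(f,f)$ together with the sign of its mean forces positivity as soon as the zero set is empty; this is the mechanism which upgrades the weaker looking hypothesis ``zero-free'' to the stronger looking hypothesis ``nonnegative'' needed for Hudson's theorem.
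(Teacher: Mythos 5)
Your proof is correct and follows essentially the same route as the paper: use real-valuedness of $W(f,f)$ to upgrade ``zero-free'' to ``one-signed'' and then invoke the Hudson--Lieb theorem, with the second assertion reducing to the first via \eqref{eq:c1}. The only (minor) difference is that you rule out the everywhere-negative case directly by Moyal's identity, whereas the paper handles it by applying Theorem~\ref{hud} to the pair $(-f,f)$; both are fine, and your variant is arguably a bit cleaner.
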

\begin{proof}
Assume that  $W(f,f) \neq 0$ everywhere.  Since $W(f,f)$
  is real-valued, this means that either $W(f,f)(z) >0$ for all $z\in
  \bR ^2$ or $W(f,f)(z) <0$ for all $z$, in which case $W(-f,f) >0$
  everywhere. By Theorem~\ref{hud} $f$ must be a
  generalized Gaussian. 
\end{proof}

Next  we fix one of the functions and assume that $g(t) = h_0(t) =
2^{d/4}e^{-\pi t^2}$ is  the normalized  Gaussian in $\rd $. 

\begin{proposition}\label{gwin}
If $V_{h_0}f$ does not have any zero, then $f$ is a generalized Gaussian. 
\end{proposition}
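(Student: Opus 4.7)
The plan is to exploit the direct link between the short-time Fourier transform with Gaussian window and the Bargmann transform, then replay the argument already used to establish Hudson's theorem, but with one of the Wigner-convolution steps stripped out.

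First, I would use formula \eqref{eq:7b}, which gives
$$
V_{h_0}f(\bar z) = e^{\pi i x\cdot\xi}\, Bf(z)\, e^{-\pi|z|^2/2},\qquad z=x+i\xi,
$$
to transfer the hypothesis to the Bargmann side: since $V_{h_0}f$ has no zeros and the modulation factor is unimodular, the entire function $Bf$ has no zeros on $\cd$. Consequently $Bf = e^{h}$ for some entire function $h:\cd\to\bC$.

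Next I would extract a growth estimate on $Bf$. Because $f\in L^2(\rd)$ and $\|h_0\|_2=1$, Cauchy-Schwarz yields $\|V_{h_0}f\|_\infty \leq \|f\|_2$, so from \eqref{eq:7b}
$$
|Bf(z)|\leq \|f\|_2\, e^{\pi|z|^2/2}\qquad\text{for all } z\in\cd.
$$
Taking logs, $\mathrm{Re}\, h(z) \leq C + \tfrac{\pi}{2}|z|^2$. By the Borel-Carath\'eodory inequality applied to $h$ (in one variable, applied to $h$ restricted to complex lines through the origin, with care taken in the multivariable setting), this forces $h$ to be a polynomial of total degree at most $2$; equivalently $h$ is a quadratic polynomial $q$ on $\cd$.

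Finally, I would invert the Bargmann transform: $Bf = e^{q}$ with $q$ a quadratic polynomial on $\cd$ corresponds exactly to $f$ being a generalized Gaussian on $\rd$. This is the same step used at the end of the sketch of the proof of Hudson's theorem immediately preceding the proposition, and ensures $f= e^{Q}$ for some quadratic polynomial $Q$ with $\mathrm{Re}\, Q$ negative definite (so that $f\in L^2$).

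The main obstacle, and the only place where real work is needed, is the Borel-Carath\'eodory step in the multivariable setting: one must deduce from the quadratic bound on $\mathrm{Re}\, h$ on $\cd$ that $h$ itself is a quadratic polynomial. In dimension one this is the classical Carath\'eodory inequality; in higher dimensions one reduces to the one-dimensional case by restricting $h$ to affine complex lines $z_0+\zeta w$ and noting that the resulting bound $\mathrm{Re}\, h(z_0+\zeta w)\leq C'(1+|\zeta|^2)$ forces each such restriction to be a polynomial of degree at most $2$ in $\zeta$, whence $h$ itself is a polynomial of total degree at most $2$ on $\cd$.
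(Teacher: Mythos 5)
Your proposal is correct and follows essentially the same route as the paper: both reduce the hypothesis to the statement that $Bf$ is zero-free via \eqref{eq:7b}, write $Bf=e^h$, use the $L^\infty$ bound on $V_{h_0}f$ to get $\mathrm{Re}\,h(z)\leq C+\pi|z|^2/2$, invoke Carath\'eodory to conclude $h$ is a quadratic polynomial, and then invert the Bargmann transform. The paper simply points back to the last part of its sketch of Hudson's theorem for these steps, whereas you spell them out and usefully flag the one-variable-to-several-variables reduction in the Carath\'eodory step.
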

\begin{proof}
Since $|  V_{h_0}f(\bar z)| = | Bf(z)| e^{-\pi |z|^2/2}$ by \eqref{eq:7},
 the entire    function $Bf$ does not have zeros in $\cd $. The last
 part of the proof of Hudson's
    theorem sketched above now implies that $f$ is a generalized
    Gaussian. 
    \end{proof}

The standard polarization identity for the
Wigner distribution now yields the following result on zeros of the real and imaginary parts of the Wigner distribution.
\begin{theorem}\label{wigreal}
  Let $f,g\in \lrd $ and assume that the real part of $W(f,g)$ is
  never zero, $\mathrm{Re}\, W(f,g)(z)\neq 0$ for all $z\in \rdd $.
  Then  $f=a \gamma$ and $g=b \gamma$ for a  generalized
  Gaussian $\gamma $ and $a,b\in \bC $.  Similarly, if
  $\mathrm{Im} W(f,g)(z) \neq 0$ for all $z\in \bR ^2$, then both $f$ and $g$ are generalized Gaussians.
\end{theorem}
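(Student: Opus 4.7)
The natural strategy is to convert the hypothesis on $\mathrm{Re}\,W(f,g)$ into a zero-free STFT statement for a whole one-parameter family of linear combinations of $f$ and $g$, and then extract the structure of the pair $(f,g)$ via a rigidity argument for families of generalized Gaussians.

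First dispose of the case of linearly dependent $f,g$: if $g=cf$ then $W(f,g)=\bar c\, W(f,f)$ is a complex multiple of a real-valued function, so the hypothesis forces $W(f,f)$ to have constant sign, and Theorem~\ref{hud} finishes the argument. Assume henceforth that $f,g$ are linearly independent, so that $f+tg\not\equiv 0$ for every $t\in\R$.

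The main step uses the real polarization identity
$$W(f+tg,f+tg)-W(f-tg,f-tg)=4t\,\mathrm{Re}\,W(f,g),\qquad t\in\R,$$
convolved with the strictly positive Gaussian $W(h_0,h_0)(z)=2^{d}e^{-2\pi|z|^{2}}$. Using the identity $W(u,u)\ast W(h_0,h_0)=|V_{h_0}u|^{2}$ recalled in Section~\ref{sec:hudson}, and the fact that $\mathrm{Re}\,W(f,g)$ is continuous and nowhere zero (hence of constant sign, WLOG positive after replacing $g$ by $-g$), this yields
$$|V_{h_0}(f+tg)|^{2}>|V_{h_0}(f-tg)|^{2}\geq 0 \qquad \text{for every } t>0.$$
Thus $V_{h_0}(f+tg)$ is zero-free on $\rdd$, so by Proposition~\ref{gwin}, $f+tg$ is a generalized Gaussian for every $t>0$.

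The final step is rigidity. Write $f+tg=e^{P_t(x)}$ with $P_t(x)=\langle A(t)x,x\rangle+\langle B(t),x\rangle+C(t)$ a quadratic polynomial in $x$ whose coefficients depend on $t$. Differentiating in $t$ gives $\partial_t P_t = g/(f+tg)$, and differentiating once more yields $\partial_t^{2}P_t=-g^{2}/(f+tg)^{2}=-(\partial_t P_t)^{2}$. The left side has degree $\leq 2$ in $x$ whereas the right is the square of a polynomial of degree $\leq 2$; examining the degree-$4$ part forces $\dot A\equiv 0$, and then examining the degree-$2$ part of $(\partial_t P_t)^{2}$ against the degree-$\leq 1$ expression $-\ddot B(t)\cdot x-\ddot C(t)$ forces $\dot B\equiv 0$. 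Thus only $C(t)$ depends on $t$, so $f+tg=e^{C(t)}\gamma$ for a fixed generalized Gaussian $\gamma(x)=e^{\langle A_0 x,x\rangle+\langle B_0,x\rangle}$; comparing this relation at two distinct values of $t$ exhibits both $f$ and $g$ as scalar multiples of $\gamma$, completing the proof for the real part. The imaginary-part case reduces to this via the identity $W(f,ig)=-iW(f,g)$, which gives $\mathrm{Im}\,W(f,g)=\mathrm{Re}\,W(f,ig)$; applying the real-part result to the pair $(f,ig)$ yields the conclusion. The main obstacle I anticipate is the degree-matching argument in the last step, particularly the multi-index book-keeping needed to run it cleanly when $d>1$.
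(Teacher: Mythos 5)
Your proof is correct and follows the paper's argument through the polarization identity, the convolution with $W(h_0,h_0)$, and the invocation of Proposition~\ref{gwin} to conclude that $f+tg$ (in the paper, $f+g$) is a generalized Gaussian. Where you diverge is the final rigidity step. The paper observes that the very same pointwise inequality also gives $W(f-g,f-g)\leq W(\gamma,\gamma)$ and then applies Hardy's uncertainty principle for the Wigner distribution, Lemma~\ref{hardy}(ii), to conclude directly that $f-g=c\gamma$; this is a two-line finish that reuses a lemma already set up in Section~\ref{sec:properties}. You instead build the one-parameter family $f+tg=e^{P_t}$, $t>0$, and exploit the identity $\partial_t^2 P_t=-(\partial_t P_t)^2$ together with degree-matching in $x$. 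This is more elementary (it avoids Hardy's uncertainty principle) but it does more bookkeeping, and you have silently assumed that $t\mapsto P_t$ can be chosen smoothly. That point deserves a sentence: since $f+tg$ is a nowhere-vanishing Gaussian, $e^{C(t)}=f(0)+tg(0)$ is affine and nowhere zero in $t$, and for each fixed $x$ the ratio $e^{\langle A(t)x,x\rangle+\langle B(t),x\rangle}=(f(x)+tg(x))/(f(0)+tg(0))$ is a nowhere-vanishing rational function of $t$; evaluating at finitely many $x$ and taking continuous branches of $\log$ shows that $A(t),B(t),C(t)$ may be chosen smooth. With that supplied, the degree-4 and degree-2 comparisons do force $\dot A\equiv 0$ and $\dot B\equiv 0$, and the resulting ODE $\ddot C=-\dot C^2$ has $e^{C(t)}$ affine in $t$, consistent with $e^{C(t)}=f(0)+tg(0)$. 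One should also note (as you do implicitly) that $C$ cannot be constant since $g\not\equiv 0$ in the linearly independent case, so two values of $t$ exhibit $f$ and $g$ as multiples of the same $\gamma$. Your handling of the degenerate case $g=cf$ via Theorem~\ref{hud}, and your reduction of the imaginary-part statement via $\mathrm{Im}\,W(f,g)=\mathrm{Re}\,W(f,ig)$, are both correct and supply details that the paper leaves to the reader.
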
 

\begin{proof}[Proof]
The polarization identity  states that 
$$
W(f+g,f+g) - W(f-g,f-g) = 4 \mathrm{Re}\, W(f,g) \, .
$$
Thus, if $\mathrm{Re} W(f,g)$ does not have any zeros, then we must have either 
\begin{align}
  W(f+g,f+g)(z) &> W(f-g,f-g)(z)  \label{c89} \\
  \qquad \text{ or } \quad W(f+g,f+g)(z)
& < W(f-g,f-g)(z) \qquad \forall z\in \rdd \, . \notag 
\end{align}
We only treat \eqref{c89}, the second case is obtained by replacing
$g$ by $-g$.

Proceeding as in the proof of Hudson's theorem, we convolve
$W(f\pm g, f\pm g)$ with $W(h_0,h_0)(z) = 2^d e^{-2\pi
  |z|^2}$. 
Since  convolution with a positive function preserves positivity,
\eqref{c89} implies the \emph{strict} pointwise inequality
$$
|V_{h_0}(f+g)|^2 = W(f+g,f+g)\ast W(h_0,h_0)  >  W(f-g,f-g)\ast
W(h_0,h_0) = |V_{h_0}(f-g)|^2 \geq 0 \, .
$$
By Proposition~\ref{gwin} $f+g$ must be a generalized Gaussian $f+g =
\gamma. $

Now, since $W(f-g,f-g) \leq W(\gamma,\gamma)$, Lemma~\ref{hardy} (ii) implies that
$f-g = c\gamma $ for some $c \in \bC $. Solving for $f$ and
$g$ we obtain that $f=\frac{1 + c}{2} \gamma$ and  $g=\frac{1-c}{2} \gamma$,  and therefore both $f$ and $g$ are multiples of
the same generalized Gaussian.   
\end{proof}

\section{Polyanalytic Functions}\label{sec:polyanal}
To find examples of zero-free Wigner distributions, one might fix
a window function $g$ different from a Gaussian and see if  every
\stft\ $V_gf$ possesses at least one zero. We pursue this idea in
dimension $d=1$ for
windows that are finite linear combinations of Hermite functions. This
line of thought connects with the theory of poly-analytic functions
and raises some new questions. 

Let $g = p h_0$ where $p$ is a polynomial and $h_0(t) = 2^{1/4}
e^{-\pi t^2}$ is the normalized Gaussian on $\bR $. We can then write $g$ as a
finite linear combination of Hermite functions, 
\begin{equation}
  \label{eq:3}
g = \sum _{j=0}^N c_j h_j \, ,  
\end{equation}
where $h_j(t) = c_j e^{\pi t^2} \frac{d^j}{dt^j}(e^{-2\pi t^2})$ is the
$j$-th Hermite function with normalization $c_j$, such  that
$\|h_j\|_2=1$. 
It is well known that the \stft\ with respect to such a window
is poly-analytic. To prepare the corresponding formulas,  fix $n$ and
first consider $V_{h_n}f$. This \stft\ can be expressed
with the help of the Bargmann transform of $f$ as follows: 
\begin{equation}
  \label{eq:4}
V_{h_n}f(x,-\xi ) = \frac{1}{\sqrt{\pi ^n n!}} e^{\pi i  x \xi } e^{-\pi
    |z|^2/2} \sum _{k=0}^n \binom{n}{k} (-\pi \bar{z})^{n-k} Bf^{(k)}(z) \, .
\end{equation}
See \cite{BS93,GL09} for some early references of this formula. 
Now assume that the window is of the form $g= \sum _{n=0}^N \sqrt{\pi
  ^n n!} c_n h_n $ and denote the associated polynomial by $P(z) =
\sum _{n=0}^N c_n z^n$. Then 
\begin{align}
  V_gf (\bar{z}) &=  e^{\pi i  x \xi } e^{-\pi
    |z|^2/2} \sum _{n=0}^N c_n \sum _{k=0}^n \binom{n}{k} (-\pi
  \bar{z})^{n-k} Bf^{(k)}(z) \notag \\
&= e^{\pi i  x \xi } e^{-\pi
    |z|^2/2} \sum _{k=0}^N \frac{1}{k!} \Big(\sum _{n=k}^N c_n  \frac{n!}{(n-k)!}(-\pi
  \bar{z})^{n-k} \Big)  Bf^{(k)}(z) \notag \\
&= e^{\pi i x \xi } e^{-\pi
    |z|^2/2}  \sum _{k=0}^N \frac{1}{k!}   Bf^{(k)}(z)
  \overline{P^{(k)}(-\pi z)} \, .   \label{eq:5}
\end{align}
From this explicit formula we see that the function $F(z) =e^{-\pi i x \xi } e^{\pi
    |z|^2/2} V_gf(\bar{z}) $  satisfies the
diffential equation 
\begin{equation}
  \label{eq:6}
  \frac{\partial ^{N+1}}{\partial \bar{z}^{N+1}} F =  0 \, .
\end{equation}
In the established terminology, $F$ is poly-analytic of order
$N+1$ and $F$ is a version of the  \pana\ Bargmann transform of $f$
\cite{abreu10,balkbook,vas00}. Precisely given $g=  \sum _{n=0}^N \sqrt{\pi
  ^n n!} c_n h_n $ with associated polynomial $P(z) = \sum c_n z^n$,
we define the \pana\ Bargmann transform of $f$ with respect to $g$ to
be  
\begin{equation}
  \label{eq:13}
  B_gf(z) = \sum _{k=0}^N \frac{1}{k!}   Bf^{(k)}(z)
  \overline{P^{(k)}(-\pi z)} \, .
\end{equation}
Since $\frac{d^2}{dz d\bar{z}} = \frac{1}{4}\Delta $ is the standard Laplacian,
\eqref{eq:13} can also  be written as
$$
B_gf(z) =   \sum _{k=0}^N \frac{1}{k!} 4^{-k} \Delta ^k \big(Bf(z)
\overline{P(-\pi z)}\big) \, .
$$
The zero sets of  \pana\ functions are   not well understood; in
general they are not discrete and are algebraic curves in
$\bC \simeq \bR ^2$. The zero count is difficult even for such simple
equations as $\bar{z} = P(z)$ for some polynomial $P$~\cite{balkbook,KN08}.

We will use two known, non-trivial results about zeros of \pana\
functions.

\begin{theorem}[Balk]
If $F$ is \pana\ and entire, such that its zero set is contained
in a bounded set, then $F(z) = e^{h(z)}
  Q(z,\bar{z})$ for  some entire function $h$ and some
\pana\ polynomial $Q(z,\bar{z})$. 
\end{theorem}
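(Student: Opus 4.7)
The plan is to start from the canonical representation $F(z) = \sum_{k=0}^N \bar{z}^k f_k(z)$ with $f_k$ entire on $\bC$ and $f_N \not\equiv 0$ (sharp order $N+1$), obtained by integrating $\partial_{\bar z}^{N+1} F = 0$ layer by layer. The driving observation is that multiplying $F$ by any factor $e^{h(z)}$ with $h$ entire preserves both polyanalyticity (since $e^{h(z)}$ is killed by $\partial_{\bar z}$) and the zero set. So the strategy reduces to picking a clever analytic exponential $e^h$ that absorbs the transcendental behavior of the coefficients $f_k$, so that $e^{-h}F$ becomes a polyanalytic polynomial $Q(z,\bar z)$.

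First I would normalize the leading coefficient. I would argue that $f_N$ has only finitely many zeros: at a point $z_0$ with $f_N(z_0) = 0$ and $|z_0|$ large, the polynomial $w \mapsto \sum_k w^k f_k(z_0)$ loses degree, and an infinite sequence of such degeneracies escaping to infinity should (via a Rouch\'e-type argument on the roots of $F(z,\cdot)$ as $z$ varies) inevitably produce zeros of the equation $F(z,\bar z) = 0$ outside any prescribed bounded disk, contradicting the hypothesis. Once this is established, Weierstrass factorization writes $f_N(z) = P_N(z) e^{h(z)}$ with $P_N$ a polynomial and $h$ entire, and replacing $F$ by $\tilde F := e^{-h} F$ we may assume $\tilde f_N = P_N$ is polynomial while the zero set and polyanalytic order are unchanged.

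The main obstacle is the remaining step: showing that every lower coefficient $\tilde f_k$ is also a polynomial. The natural framework is to introduce the entire function of two variables
\[
\Phi(z,w) := \sum_{k=0}^N w^k \tilde f_k(z),
\]
a polynomial of degree $N$ in $w$ with polynomial leading coefficient $P_N(z)$ and entire coefficients in $z$. The zero set of $\tilde F$ is the intersection of the algebraic variety $V = \{\Phi = 0\} \sset \bC^2$ with the antidiagonal $\Delta = \{(z,\bar z) : z \in \bC\}$, which by hypothesis is bounded. Viewing $V$ as an $N$-sheeted branched cover of the $z$-plane, with algebroid sheets $w_j(z)$, the condition $w_j(z) \neq \bar z$ for $|z|$ large constrains the asymptotic behavior of the sheets, and Vieta's formulas then transfer this control to the coefficients $\tilde f_k(z)$, which are (up to signs and division by $P_N$) elementary symmetric functions of the $w_j$. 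Since an entire function of polynomial growth is a polynomial, this would finish the proof. The delicate part is extracting that polynomial growth from the purely geometric hypothesis that the antidiagonal avoids $V$ outside a compact set; this is where the deeper machinery from Balk's theory of algebroid functions and their exceptional sets must enter, and I expect it to be by far the hardest step to make rigorous.
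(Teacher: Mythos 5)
The paper does not actually prove this statement; it cites \cite{balk66} and \cite{ostro69} for the argument, which proceeds through Nevanlinna--Selberg value-distribution theory for algebroid functions. So your proposal is a genuinely independent attempt, and I will assess it on its own terms.

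Your overall architecture (normalize the leading coefficient by a Weierstrass factor, then argue the remaining coefficients are polynomials via the avoidance $w_j(z)\neq \bar z$ for $|z|$ large, and finish by ``entire of polynomial growth implies polynomial'') is the right shape, and you correctly flag the last step as the one requiring the heavy machinery. However, the \emph{first} step already has a genuine gap that you present as essentially routine: the claim that $f_N$ has only finitely many zeros. Your argument is that a zero of $f_N$ at a far-out point $z_0$ makes the polynomial $w\mapsto\sum_k w^k f_k(z_0)$ drop degree, and a Rouch\'e-type argument should then produce a nearby zero of $F$. This does not work locally. Take $N=1$ and $F(z,\bar z)=\bar z\,f_1(z)+f_0(z)$ with $f_1(z_0)=0$ but $f_0(z_0)\neq 0$: in a neighborhood of $z_0$ the term $\bar z\,f_1(z)$ is small while $f_0$ stays bounded away from zero, so $F$ has \emph{no} zero near $z_0$, no matter how large $|z_0|$ is. A root of $\Phi(z,\cdot)$ escaping to $\infty$ as $z\to z_0$ says nothing by itself about that root passing through $\bar z$. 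To force zeros of $F$ from infinitely many zeros of $f_N$ one needs global information about how the branches $w_j(z)$ wind relative to $\bar z$ on large circles, i.e.\ exactly the kind of argument-principle/Nevanlinna control that you defer to the later step. So in effect both your step~2 and your final step rest on the same unproved core, and the apparent reduction to ``just absorb $f_N$ into $e^h$'' is illusory.

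A second, smaller point: after writing $f_N=P_N e^h$ and dividing, you want the elementary symmetric functions of the branches $w_j$ to have polynomial growth, and you invoke ``an entire function of polynomial growth is a polynomial.'' But the elementary symmetric functions you get are $\tilde f_k/P_N$, which are a priori only meromorphic (with possible poles at the zeros of $P_N$); you need to show separately that the $\tilde f_k$ themselves, not their quotients by $P_N$, are entire of polynomial growth. This is minor compared to the main gap but should be stated. In short, your proposal is a reasonable high-level outline of the Balk--Ostrovskii strategy, but the two steps you treat as independent are really the same hard step, and neither is proved; as written this is a plan, not a proof.
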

See \cite{balk66} and \cite{ostro69} for a proof. From this result we obtain the following
  reduction.

  \begin{proposition}
    Fix $g = ph_0$ for some polynomial $p$. If $V_gf $ does not have
    any zeros in $\bC $, then $f(x)=q(x) e^{-(a'x^2+b'x+c')}$ for some
    polynomial $q$ and $a',b',c'\in \bC , \mathrm{Re}\, a'>0$.   
  \end{proposition}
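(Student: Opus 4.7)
The plan is to combine Balk's theorem (quoted just above the statement) with the explicit formula \eqref{eq:13} for the \pana\ Bargmann transform, then to conclude by Hadamard's factorization theorem and the inversion of the ordinary Bargmann transform.

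First, I observe that by \eqref{eq:5}, the hypothesis $V_gf\neq 0$ on $\bC$ is equivalent to $B_g f(z)\neq 0$ for all $z\in\bC$, because the factors $e^{\pi i x\xi}$ and $e^{-\pi|z|^2/2}$ are nowhere vanishing. Writing $g=\sum_{n=0}^N\sqrt{\pi^n n!}\,c_n h_n$, I may assume $c_N\neq 0$ by truncating $P$ to its true degree. Then $B_g f$ is an entire \pana\ function of order exactly $N+1$ whose zero set is empty (hence bounded), and Balk's theorem gives
$$B_g f(z)=e^{h(z)}\,Q(z,\bar z),$$
for some entire $h$ and some \pana\ polynomial $Q$ of $\bar z$-degree at most $N$.

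Next I extract the ordinary Bargmann transform $Bf$ by matching the coefficients of $\bar z^N$ on both sides. In the sum \eqref{eq:13}, only the term $k=0$ contributes at the top $\bar z$-degree, namely $Bf(z)\,\overline{P(-\pi z)}$, whose $\bar z^N$-coefficient equals $(-\pi)^N\overline{c_N}\,Bf(z)$. Writing $Q(z,\bar z)=\sum_{j=0}^N q_j(z)\bar z^j$ with polynomial coefficients, the $\bar z^N$-coefficient of the right-hand side is $e^{h(z)}q_N(z)$, so
$$Bf(z)=c\,q_N(z)\,e^{h(z)}$$
for a nonzero constant $c$ and a polynomial $q_N$. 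Since $f\in\lr$, the \stft\ $V_{h_0}f$ is bounded and \eqref{eq:7b} yields $|Bf(z)|\le C\,e^{\pi|z|^2/2}$, so $Bf$ is entire of order at most $2$. Its zero set coincides with the finite zero set of $q_N$, and Hadamard's factorization theorem forces
$$Bf(z)=\tilde q(z)\,e^{az^2+bz+d},\qquad a,b,d\in\bC,$$
for some polynomial $\tilde q$.

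Finally, I invert the Bargmann transform. The map $B^{-1}$ sends $z^k$ to a multiple of the Hermite function $h_k$ (polynomial times Gaussian), and a direct Gaussian integral shows that $B^{-1}$ takes $e^{az^2+bz+d}$ to a function of the form $C\,e^{-\pi\alpha x^2+\beta x+\gamma}$ with $\alpha=\frac{\pi-2a}{\pi+2a}$; by the product rule this extends to $\tilde q(z)e^{az^2+bz+d}\mapsto q(x)e^{-(a'x^2+b'x+c')}$ with $q$ a polynomial and $a'=\pi\alpha$. The condition $f\in\lr$ is equivalent to $|a|<\pi/2$, which is the same as $\mathrm{Re}\,a'>0$.

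The main obstacle is the coefficient-comparison step: I have to verify that Balk's decomposition is compatible with the $\bar z$-degree structure coming from \eqref{eq:13}, so that $q_N$ really is a polynomial in $z$ alone and the identification $(-\pi)^N\overline{c_N}\,Bf(z)=e^{h(z)}q_N(z)$ is legitimate. The secondary, more routine technical point is keeping careful track of the Gaussian integral in the inverse Bargmann transform in order to translate the $L^2$-integrability of $f$ into the sign condition $\mathrm{Re}\,a'>0$.
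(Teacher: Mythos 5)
Your proof follows the same overall strategy as the paper's: pass to the poly-analytic Bargmann transform, invoke Balk's theorem, compare top $\bar z$-coefficients to isolate $Bf(z)=c\,q_N(z)e^{h(z)}$, use the Fock-space growth bound, and invert. The two points where you diverge are cosmetic rather than structural. First, where the paper invokes Carath\'eodory's inequality on $\mathrm{Re}\,h$ to force $h$ to be a quadratic polynomial, you instead observe that $Bf$ has order at most $2$ and finitely many zeros and apply Hadamard's factorization; both are standard and equally valid here, though Carath\'eodory acts directly on the already-exhibited factor $e^{h}$ while Hadamard re-derives the factorization from scratch. Second, for the inversion step the paper restricts $Bf$ to the imaginary axis, identifies it with $\widehat{fh_0}$, and Fourier-inverts, whereas you invert the Bargmann transform directly via the Gaussian integral and the observation that multiplication by $z$ on the Fock side corresponds to a first-order differential operator on $L^2(\bR)$ (your ``product rule''). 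Your closing remark quantifying $\mathrm{Re}\,a'>0$ via $|a|<\pi/2$ is a correct and welcome addition that the paper leaves implicit. The ``main obstacle'' you flag — legitimacy of the $\bar z^N$-coefficient comparison — is in fact unproblematic: a poly-analytic function of order $N+1$ has a unique representation $\sum_{j=0}^N f_j(z)\bar z^j$ with $f_j$ entire, the left side of \eqref{eq:13} has exact $\bar z$-degree $N$ with top coefficient $(-\pi)^N\overline{c_N}\,Bf(z)\not\equiv 0$, and Balk's $Q$ must match that degree, so $q_N$ is a genuine polynomial in $z$ alone.
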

  \begin{proof}
If the zero set of  $V_gf$ is empty, then   the zero set of the
\pana\ Bargmann transform $B_gg$  is also empty (and thus bounded). By
Balk's theorem  $B_gf$ factors into an entire function without zeros and a 
 \pana\ polynomial $Q(z,\bar{z})$, i.e., $B_gf(z) = e^{h(z)} Q(z,\bar{z})$
for $h$ entire. Consequently, by \eqref{eq:5}
we  see that 
\begin{equation}
  \label{eq:iu}
 B_gf(z) = \sum _{k=0}^n \frac{1}{k!} Bf^{(k)}(z)
\overline{P^{(k)}(-\pi z)} = e^{h(z)} Q (z,\bar{z}) \, . 
\end{equation}
 Assume that $P(z) = z^n + \cO (z^{n-1})$ and $Q(z,\bar{z}) = \sum
_{j=0}^m p_j(z) \bar{z}^j$. Comparing the highest term in $\bar{z}$ in
\eqref{eq:iu}, we
conclude that $n=m$ and  that 
$$
(-\pi )^n Bf(z) = e^{h(z)} p_n(z) \, .
$$
Since a function $Bf$ in Fock space grows at most like $e^{\pi
  |z|^2/2}$, we conclude that $\mathrm{Re}\, h(z) \leq c + \pi |z|^2/2$
and therefore  $h(z) $ must be a quadratic polynomial $h(z) =- (az^2 + bz +
c)$ by Caratheodory's theorem. 
On the one hand, we have $Bf(z) = e^{-(az^2+bz+c)}
p_n(z)$, and on the other hand, 
\begin{align}
   Bf(iy) e^{-\pi y^2/2} &= V_{h_0}f(0,-y) = \int _{\bR } f(t) e^{-\pi
                           t^2} e^{2\pi i ty} \d t \notag \\
  &=  \widehat{(f h_0)} (-y) = e^{-ay^2-iby-c}
   p_n(iy) \, .    \label{eq:13b}
\end{align}
Now observe that the inverse Fourier transform of the generalized  Gaussian
$e^{-ay^2-iby-c}$ is again a generalized Gaussian $e^{r(x)}$, and  the polynomial
$p_n(iy)$ turns into the differential operator $p_n (\frac{1}{2\pi }\,
\frac{d}{ dx })$. Consequently, $f h_0 = p_n (\frac{1}{2\pi}\, \frac{d}{ dx
}) e^{r(x)} $ and 
$$
f(x) = q(x) e^{-(a'x^2+b'x+c')}  \, 
$$
for some polynomial $q$. 
   \end{proof}

  In general we cannot say more about the form of $f$. If however, $f=
  q h_0$ with the standard Gaussian $h_0$, then we can reduce the
  problem of zero-free \wid s to a problem of polynomials of two
  variables by means of the \emph{fundamental theorem of algebra for \pana\
  polynomials~\cite{balk68,balkbook}.}
\begin{theorem}[Balk]
  Let $P(z,w)$ be a polynomial in two complex variables. If the exact degree of the  \pana\
polymonial  $P(z,\bar{z})$ as a polynomial of two variables exceeds
the exact degree of $P$ in one of these variables by a factor more
than two, the $P$ has at least one zero.
\end{theorem}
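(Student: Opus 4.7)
The approach is a proof by contradiction via a winding-number computation, in the spirit of the classical proof of the fundamental theorem of algebra. Denote by $d$ the total degree of $P$ and by $n$ its exact degree in whichever variable is assumed to be small; after conjugating $z\leftrightarrow\bar{z}$ if necessary, we may assume $n=\deg_{w}P$, so the hypothesis reads $d>2n$. Suppose for contradiction that $F(z):=P(z,\bar{z})$ is nonvanishing on $\bC$. Since $\bC$ is simply connected, $F:\bC\to\bC\setminus\{0\}$ is null-homotopic and the winding number
\[
W(R):=\frac{1}{2\pi i}\oint_{|z|=R}\frac{dF(z)}{F(z)}
\]
is independent of $R>0$. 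As $R\to 0^{+}$, $F(z)\to P(0,0)$, which must be nonzero (else $0$ is a zero of $P(z,\bar{z})$ and we are done), so $W(R)\equiv 0$.

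Next I would compute $W(R)$ for large $R$ and find it to be strictly positive. Decompose $P=P_{d}+P_{\text{low}}$, where $P_{d}$ is the homogeneous top-degree part. Parametrizing $z=Re^{i\theta}$ gives $P_{d}(z,\bar{z})=R^{d}T(\theta)$ with
\[
T(\theta)=\sum_{j+k=d}c_{jk}e^{i(j-k)\theta}.
\]
Every surviving index with $j+k=d$ satisfies $k\leq n<d/2$, so the frequency $j-k=d-2k$ obeys $j-k\geq d-2n>0$. Viewed as a Laurent polynomial in $w=e^{i\theta}$, $T$ is therefore a genuine polynomial in $w$ with a zero at $w=0$ of order at least $d-2n$. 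Assuming for the moment that $T$ has no zeros on $|w|=1$, the estimate $|P-P_{d}|\leq CR^{d-1}\ll|P_{d}|$ on $|z|=R$ furnishes a straight-line homotopy through $\bC\setminus\{0\}$ from $F|_{|z|=R}$ to $P_{d}|_{|z|=R}$, and hence $W(R)=\mathrm{wind}(T)$ for $R$ large. By the argument principle applied to the polynomial $T(w)$, this winding equals the number of zeros of $T$ inside $|w|<1$ counted with multiplicity, which is at least the vanishing order at $0$, i.e.\ at least $d-2n>0$. This contradicts $W(R)\equiv 0$ and forces $P(z,\bar{z})$ to have a zero.

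The main obstacle is the remaining case in which $T$ vanishes at some $\theta_{0}\in[0,2\pi)$: along the corresponding rays, $P_{d}$ degenerates and cannot dominate $P_{\text{low}}$, so the homotopy argument above breaks down. The plan is a perturbation argument. One slightly modifies the top-degree coefficients of $P$ to obtain a polynomial $P_{\epsilon}$ whose associated trigonometric polynomial $T_{\epsilon}$ is nowhere vanishing on $S^{1}$, and applies the previous case to produce a zero $z_{\epsilon}$ of $P_{\epsilon}(z,\bar{z})$. The key technical point is an a priori bound keeping the family $(z_{\epsilon})$ in a compact subset of $\bC$: regarded as $n$ algebraic branches $w=\alpha_{j}(z)$ of $P(z,w)=0$, a Newton-polygon analysis shows that each branch grows like $|\alpha_{j}(z)|\sim|z|^{(d-n)/n}$ at infinity, and the exponent $(d-n)/n>1$ (equivalent to $d>2n$) ensures that $\bar{z}=\alpha_{j}(z)$ admits no solutions outside a sufficiently large disk, robustly under small perturbations of the coefficients. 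Passing to a convergent subsequence $z_{\epsilon_{k}}\to z_{*}$ by compactness and using continuity of $P$ yields the desired zero $P(z_{*},\bar{z}_{*})=0$ of the original polynomial.
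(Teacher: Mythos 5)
The paper does not actually prove this theorem --- it only cites \cite{balk68,balkbook} --- so there is no proof in the paper to compare against, and your argument has to stand on its own. The winding-number computation in the non-degenerate case (when the top-degree symbol $T(\theta)=\sum_{j+k=d}c_{jk}e^{i(j-k)\theta}$ has no zero on the unit circle) is correct, and the key observation --- every surviving exponent $j-k=d-2k$ is at least $d-2n>0$, so $T$ is an honest polynomial in $e^{i\theta}$ vanishing at the origin to positive order --- is exactly the right structure to exploit.

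The gap is in the degenerate case, and it is a real one. Your claim that ``each branch grows like $|\alpha_j(z)|\sim|z|^{(d-n)/n}$'' is not true: the branches of $P(z,w)=0$ at infinity have growth exponents given by the (negated) slopes of the upper Newton polygon of $P$, and these generally differ from branch to branch; in particular a branch can have exponent exactly $1$. Take $P(z,w)=z^2w-z^3$, so $d=3$, $n=1$, $d>2n$, and your predicted exponent is $(d-n)/n=2$; but the unique branch is $w=z$, of exponent $1$. Correspondingly $P(z,\bar{z})=z^2(\bar{z}-z)$ vanishes identically on the real axis, so the zero set of the unperturbed $P(z,\bar{z})$ is unbounded. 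Thus the a~priori compactness you need for the family $(z_\epsilon)$ simply fails, and the passage to a convergent subsequence breaks down. Moreover this is not an accident confined to one example: a branch of exponent exactly $1$ occurs precisely when at least two distinct monomials realize the top total degree $d$, and that is the \emph{only} situation in which $T$ can vanish on the circle. In other words, the a~priori bound fails in exactly the case where you invoke it. As written, the perturbation step cannot close the argument, and the degenerate case needs a genuinely different idea.
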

 In other words, let $n_z =
\mathrm{deg}_z\, p$,  $n_{\bar{z}} = \mathrm{deg}_{\bar{z}}\, p$, and $s=
\mathrm{deg}\, p$. If $s > 2 \min (n_z, n_{\bar{z}})$, then $P$ has at
least one zero. See \cite{balk68,balkbook} for the proof. 

From this we deduce the following consequence.

\begin{proposition}\label{prop:lagu}
  Assume that  $g= ph_0$ and $f=qh_0$ for some polynomials $p,q$. If
  $V_gf$ does not have any zeros, then $\mathrm{deg}\, p =
  \mathrm{deg}\, q$. 

In other words, if $f$ and $g$ are finite linear combinations of
Hermite functions, whose highest degrees differ, then the \stft\
$V_gf$ must have a zero. 
\end{proposition}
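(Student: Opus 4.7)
The plan is to convert the hypothesis into a statement about the polyanalytic Bargmann transform $B_gf$ defined in~\eqref{eq:13} and then apply Balk's fundamental theorem of algebra. Since $V_gf$ has no zeros iff $B_gf$ has no zeros (by~\eqref{eq:5}, the exponential prefactor never vanishes), it suffices to show that if $\deg p \neq \deg q$, then $B_gf$ must vanish somewhere.

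First I would record the degree structure. Write $m = \deg q$ and $n = \deg p$; because $f = qh_0$ and $g = ph_0$ are finite linear combinations of Hermite functions, their Bargmann transforms $Bf$ and the associated polynomial $P$ are honest polynomials of degrees $m$ and $n$, respectively (this is the standard fact that $Bh_k(z) = z^k/\sqrt{k!}$ up to normalization). Plugging into~\eqref{eq:13} gives
\begin{equation*}
B_gf(z) = \sum_{k=0}^{\min(n,m)} \frac{1}{k!}\, Bf^{(k)}(z)\, \overline{P^{(k)}(-\pi z)},
\end{equation*}
so $B_gf(z)$ is indeed a polyanalytic polynomial $Q(z,\bar z)$ with $Q(z,w)$ a polynomial in two complex variables.

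Next I would read off its bidegree. The $k=0$ summand is $Bf(z)\,\overline{P(-\pi z)}$, a pure product with $z$-degree exactly $m$ and $\bar z$-degree exactly $n$; every subsequent summand has strictly smaller degree in both $z$ and $\bar z$. Hence no cancellation is possible in the top monomial $z^m\bar z^n$, and therefore $\deg_z Q = m$, $\deg_{\bar z} Q = n$, and the total degree is $\deg Q = m+n$. This is the only place where one has to be slightly careful, and it is the main (though mild) obstacle: one must verify that the leading coefficient does not vanish, which reduces to checking that $Bf$ and $P$ have the claimed degrees as described above.

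Finally I would invoke Balk's fundamental theorem of algebra for polyanalytic polynomials. With $n_z = m$, $n_{\bar z} = n$, $s = m+n$, the hypothesis $s > 2\min(n_z, n_{\bar z})$ becomes $m + n > 2\min(m,n)$, which is equivalent to $m \neq n$. Thus if $\deg p \neq \deg q$, Balk's theorem guarantees that $B_gf$, and hence $V_gf$, has at least one zero. Contrapositively, zero-freeness of $V_gf$ forces $\deg p = \deg q$, which is exactly the claim.
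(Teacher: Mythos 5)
Your proof is correct and follows essentially the same route as the paper: express $B_gf$ via~\eqref{eq:13} as a polyanalytic polynomial with $\deg_z = \deg q$, $\deg_{\bar z} = \deg p$, and total degree $\deg p + \deg q$, then invoke Balk's fundamental theorem of algebra, noting $\deg p + \deg q > 2\min(\deg p, \deg q)$ exactly when $\deg p \neq \deg q$. Your extra remark that the $k=0$ term dominates the top monomial so no cancellation occurs is a useful bit of diligence the paper leaves implicit.
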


\begin{proof}
  Under the assumptions stated, $Bf$ is a polynomial $Q$ and the
  \pana\ Bargmann transform is then (for $N$ large enough) 
  \begin{equation}
    \label{eq:13c}
B_gf(z) =   \sum _{k=0}^N \frac{1}{k!}   Q^{(k)}(z)
  \overline{P^{(k)}(-\pi z)} \,     
  \end{equation}
is a \pana\ polynomial. Here $\mathrm{deg}_{z} \,B_gf = \mathrm{deg} \,Q$,
$\mathrm{deg}_{\bar{z}} \, B_gf = \mathrm{deg} \, P$, and
$\mathrm{deg} B_gf\, 
= \mathrm{deg} \, P +  \mathrm{deg} \, Q$. If $\mathrm{deg} \, P \neq
\mathrm{deg} \, Q$, then $\mathrm{deg} \, B_gf > 2 \min (\mathrm{deg}
\, P, \mathrm{deg} \, Q)$, and by the fundamental theorem of algebra $B_gf$
must have a zero. 
\end{proof}

If the degree condition in the fundamental theorem is not satisfied,
then a \pana\ polynomial may not have any zeros, for instance, the
polynomial  $p(z,\bar{z}) =
1+z\bar{z}$ 
does  not have any zeros. 
Note, however, that the \pana\ polynomials arising in
Proposition~\ref{prop:lagu} have a very special structure.

Let us consider the simple example of degree 1 polynomials.  
It is enough to consider
$P(z)=-z+\pi b$ and $Q(z)=z+a$ with $a,b\in \bC $.
Then we are looking for a root of the polynomial 
$$
Q(z)\overline{P(-\pi z)}+Q'(z)\overline{P'(-\pi z)}
=\pi (z+a)(\overline{z}+\overline{b})-1.
$$
Write  $\zeta=\sqrt{\pi}(z+a)$ and  $c=\sqrt{\pi}(b-\overline{a})$,
the equation becomes $\zeta(\overline{\zeta}+c)=1$.  Writing $c$ in polar coordinates $c=\rho e^{i\theta}$
and a further  change of variable  $\zeta=\xi e^{-i\theta}$ yields the
equation $\xi(\overline{\xi}+\rho)=1$.
This equation has at least two real roots, namely $\frac{-\rho\pm\sqrt{\rho^2+4}}{2}$. This proves the following


\begin{proposition}
If $f$ or $g$ is a linear combination of $h_0,h_1$ then $W(f,g)$ has a zero, unless $f,g$ are both multiples of $h_0$.
\end{proposition}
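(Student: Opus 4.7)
The plan is to combine the Wigner--STFT identity~\eqref{eq:c1} with the polyanalytic Bargmann-transform framework developed in Section~\ref{sec:polyanal} and the degree constraint of Proposition~\ref{prop:lagu}, and then to exhibit an explicit zero in the only configuration not yet ruled out.

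First I would reduce to the short-time Fourier transform. By~\eqref{eq:c1}, the zero set of $W(f,g)$ coincides, up to the affine scaling $(x,\xi)\mapsto(2x,2\xi)$, with that of $V_{Ig}f$. Since the parity reflection $I$ stabilises $\mathrm{span}(h_0,h_1)$ (fixing $h_0$ and negating $h_1$), it suffices to show: if $f,g\in\mathrm{span}(h_0,h_1)$ and $V_g f$ has no zeros, then both $f$ and $g$ must be multiples of $h_0$. Writing $g=ph_0$ and $f=qh_0$ with $\mathrm{deg}\,p,\mathrm{deg}\,q\le 1$, Proposition~\ref{prop:lagu} forces $\mathrm{deg}\,p=\mathrm{deg}\,q$. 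If this common degree is $0$, we are in the excluded case; otherwise both $p$ and $q$ are genuinely linear.

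The bulk of the work is then to derive a contradiction in this linear case. Normalising so that $P(z)=-z+\pi b$ and $Q(z)=z+a$ for some $a,b\in\bC$ (the scalar prefactors are irrelevant for the vanishing problem), the polyanalytic Bargmann transform~\eqref{eq:13c} collapses to the two-term sum
\[
B_g f(z) = Q(z)\overline{P(-\pi z)}+Q'(z)\overline{P'(-\pi z)} = \pi(z+a)(\bar z+\bar b)-1.
\]
To produce a zero I would change variables to $\zeta=\sqrt{\pi}(z+a)$, which rewrites $B_g f(z)=0$ as $\zeta(\bar\zeta+c)=1$ for an explicit $c\in\bC$ depending on $a,b$. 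Writing $c=\rho e^{i\theta}$ and then setting $\zeta=\xi e^{-i\theta}$ absorbs the phase and reduces the problem to $\xi(\bar\xi+\rho)=1$; restricting to real $\xi$, this is the quadratic $\xi^2+\rho\xi-1=0$, whose discriminant $\rho^2+4$ is always strictly positive. Hence two real roots $\xi=\tfrac{-\rho\pm\sqrt{\rho^2+4}}{2}$ exist, each of which yields a zero of $B_g f$ and therefore of $V_g f$ and $W(f,g)$.

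I do not anticipate a genuine obstacle: the heavy lifting is already done by Proposition~\ref{prop:lagu}, which reduces the problem to one involving a low-degree polyanalytic polynomial, after which a two-step change of variables lands us on an elementary real quadratic. The only care needed is to verify that the rescaling and rotation are invertible, so that a real solution in $\xi$ genuinely corresponds to a complex $z$ with $B_g f(z)=0$; this is automatic from the constructions.
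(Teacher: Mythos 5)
Your proof is correct and follows essentially the same route as the paper: reduce via Proposition~\ref{prop:lagu} to the case where both $P$ and $Q$ are linear, write out $B_gf(z)=\pi(z+a)(\bar z+\bar b)-1$, and find a real root of $\xi(\bar\xi+\rho)=1$ after the two changes of variables $\zeta=\sqrt\pi(z+a)$ and $\zeta=\xi e^{-i\theta}$. The only cosmetic difference is your preliminary observation that $I$ stabilizes $\mathrm{span}(h_0,h_1)$, which the paper leaves implicit since it works with $V_gf$ throughout.
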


In general, $B_gf$ may be written
as a linear combination of various Laguerre polynomials or complex
Hermite polynomials, see~\cite{folland89} or \cite{Ism16}. Thus $B_gf$
has a very special structure.
One may therefore  hope to  prove the existence of a zero for the
special 
\pana\ polynomials of the form $B_gf$. 
An other simple example is constructed as follows: if $f= p h_0$ for some polynomial $p$ of degree at least 1 and $g = If$ (i.e.,
$g(x) =
f(-x) = p(-x) h_0(x)$), then 
$|V_{If}f (2z)|   = W(f,f)$. The corresponding \pana\ polynomial
$B_{If}f$  then has a zero by Hudson's theorem, although Balk's
fundamental theorem of algebra does not apply.




\section{An example for bounded functions: the short -time Fourier transform with a
  rectangular window}\label{sec:step}

We next study
a class of examples in  which the  decay properties are critical for the
existence or non-existence of 
of zeros of the short-time Fourier transform. For this purpose, we  investigate
the short-time Fourier transform of carefully constructed step
functions. 
We consider the window $g=\un_{(0,1)}$ and   step functions with discontinuities on the union $\bZ
\cup \alpha \bZ $.  We will need a simple lemma first.

\begin{lemma}
\label{lem:step}
Let $0=a_1<a_2<\cdots<a_n<a_{n+1}=1$ and either
$c_1>c_2>\cdots>c_n>0$ or $c_1<c_2 < \dots < c_n$.  Define
$f$ on $(0,1)$ by
$$
f(x)=\sum_{k=1}^n c_k\un_{(a_k,a_{k+1})}.
$$
Then there exists $\xi\in\R$ such that
$$
\int_0^1 f(x)e^{-2\pi i x\xi}\d x=0
$$
if and only if $a_2,\ldots,a_n\in\Q$.
\end{lemma}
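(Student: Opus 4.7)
The strategy is to recast the integral as a finite sum of complex exponentials via integration by parts, and then to extract rationality from a sharp triangle inequality forced by the monotonicity of the $c_k$.

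For $\xi\neq 0$, integrating by parts (equivalently, Fourier transforming the distributional derivative of $f$ extended by zero outside $(0,1)$) gives
\[
-2\pi i \xi \int_0^1 f(x)e^{-2\pi i x\xi}\d x \;=\; \sum_{j=1}^{n+1} \tilde b_j\, e^{-2\pi i a_j \xi},
\]
where the jumps are $\tilde b_1=c_1$, $\tilde b_j=c_j-c_{j-1}$ for $2\le j\le n$, and $\tilde b_{n+1}=-c_n$, and $\sum_j \tilde b_j = 0$. The easy implication is then immediate: if all $a_j$ are rational with common denominator $q\in\N$, then at $\xi=q$ every exponential equals $1$, the right-hand side telescopes to $\sum_j \tilde b_j = 0$, and the integral vanishes.

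For the converse in Case~1 ($c_1>\cdots>c_n>0$), one has $\tilde b_1=c_1>0$ and $\tilde b_j<0$ for every $j\ge 2$. Setting $s_j:=-\tilde b_j>0$, the vanishing of the sum becomes
\[
c_1 \;=\; \sum_{j=2}^{n+1} s_j\, e^{-2\pi i a_j \xi}, \qquad \sum_{j=2}^{n+1} s_j = c_1.
\]
This is the equality case of the triangle inequality, which forces all the unit complex numbers $e^{-2\pi i a_j \xi}$, $j\ge 2$, to share a common argument; since their positive-weighted sum equals the positive real $c_1$, they must all equal $+1$. The instance $j=n+1$, $a_{n+1}=1$, then gives $\xi\in\Z\setminus\{0\}$, and $a_j \xi\in\Z$ for $j=2,\dots,n$ yields $a_j\in\Q$.

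Case~2 ($0<c_1<\cdots<c_n$) reduces to Case~1 by reflection: the function $\tilde f(x):=f(1-x)$ has strictly decreasing positive coefficients $c_n>\cdots>c_1>0$ and interior breakpoints $1-a_n,\dots,1-a_2$, and the identity $\widehat{\tilde f}(\xi)=e^{-2\pi i \xi}\hat f(-\xi)$ transfers nonzero zeros between $\hat f$ and $\widehat{\tilde f}$, while $1-a_j\in\Q\iff a_j\in\Q$. The critical step is the sharp triangle inequality above; its success depends on strict monotonicity (so each $\tilde b_j$ with $j\ge 2$ is nonzero) together with positivity of all $c_k$ (so those jumps share a sign), and I expect this to be the main obstacle to any generalization that would relax monotonicity or positivity.
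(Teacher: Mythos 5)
Your proof is correct and follows essentially the same route as the paper: both reduce $2\pi i\xi\hat f(\xi)$ to a weighted sum of unit exponentials attached to the jump points of $f$, and both then invoke extremality of boundary points of the unit disc for convex combinations (equivalently, the equality case of the triangle inequality) to force every exponential to equal $1$, yielding $\xi\in\Z\setminus\{0\}$ and $a_j\xi\in\Z$. The only cosmetic difference is that you dispatch the increasing case by the reflection $f\mapsto f(1-\cdot)$, whereas the paper re-runs the extreme-point argument after regrouping the same sum; both work, and your sign in the integration-by-parts identity should be $+2\pi i\xi$ rather than $-2\pi i\xi$, which does not affect the argument.
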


\begin{proof} 
We first consider the case when $c_j$ is strictly decreasing.  For
$\xi=0$ we obtain 
$$
\int_0^1 f(x)e^{-2\pi i x\xi}\d x=\int_0^1 f(x)\d x=\sum_{k=1}^n
c_k(a_{k+1}-a_k)>0\, .
$$
For $\xi \neq 0$ we set $I(\xi ) = 2\pi i \xi \int _0^1 f(x) e^{2\pi i
  x\xi } \d x = 2\pi i \xi \hat{f}(-\xi )$. Then 
\begin{align*}
I(\xi) &=2\pi i\xi\int_0^1 f(x)e^{2\pi i x\xi}\d x=\sum_{k=1}^n
         c_k(e^{2\pi i a_{k+1}\xi}-e^{2\pi i  a_{k}\xi})\\ 
&=-c_1+\sum_{k=2}^{n} (c_{k-1}-c_{k})e^{2\pi i a_{k}\xi}+c_ne^{2\pi i \xi}\\
&=c_1\left(\sum_{k=2}^{n} \frac{c_{k-1}-c_{k}}{c_1}e^{2\pi i a_{k}\xi}+\frac{c_n}{c_1}e^{2\pi i \xi}-1\right).
\end{align*}
Thus $\hat{f}(-\xi ) = (2\pi i )^{-1} I(\xi ) = 0$, \fif\ 
\begin{equation}
  \label{eq:c5}
\sum_{k=2}^{n} \frac{c_{k-1}-c_{k}}{c_1}e^{2\pi i
  a_{k}\xi}+\frac{c_n}{c_1}e^{2\pi i \xi} = 1 \, .  
\end{equation}

Since $|e^{2\pi i a_k \xi } | =1$ and 
$$
\sum_{k=2}^{n} \frac{c_{k-1}-c_{k}}{c_1}+\frac{c_n}{c_1}=1 \, 
$$
with positive terms, \eqref{eq:c5} is a convex linear combination of
points in the unit disc of $\C$. 

As $1$ is an extreme point of the unit disc, \eqref{eq:c5} holds 
if and only if it is a convex combination of $1$'s. Therefore, $I(\xi)=0$ if and only if
$e^{2\pi i a_{k}\xi}=1$, $k=2,\ldots,n$, and $e^{2\pi i \xi}=1$. This
implies that  $\xi\in\Z\setminus\{0\}$ and $a_k\xi \in\Z \setminus \{0\}$, $k=2,\ldots,n$.
Thus, if one of the $a_k$'s is irrational, then $\hat{f}(\xi ) \neq 0$
for all $\xi \neq 0$.  
Conversely, if all $a_k\in\Q$, write $a_k=p_k/q_k$, $p_k,q_k\in\Z$ and take $\xi= q_1q_2\cdots q_n$ then $I(\xi)=0$.
Note that this is true whatever the $c_k$'s are.

If $0< c_1 < c_2 < \dots < c_n$, we write
$$
I(\xi )  = c_n \Big( -\frac{c_1}{c_n} \cdot 1 + \sum
_{k=2}^n\frac{c_{k-1}-c_k}{c_n} e^{2\pi i a_k \xi } + e^{2\pi i \xi }
\Big) \, ,$$
and
$I(\xi ) =0$ holds, \fif\
$$
\frac{c_1}{c_n} + \sum_{k=2}^n \frac{c_k- c_{k-1}}{c_n} e^{2\pi i  a_k
  \xi } = -e^{2\pi i \xi } = e^{2\pi i (\xi +1/2)} \, .
$$
Again $\frac{c_1}{c_n} + \sum_{k=2}^n \frac{c_k- c_{k-1}}{c_n} = 1$
with positive terms, and the same convexity argument implies that
$I(\xi ) \neq 0$ whenever one of the $a_k$'s is irrational. 
\end{proof}
The case $c_{k+1}=c_k$  does not need to be considered,
as this amounts to remove $a_{k+1}$. However, 
the proof of Lemma \ref{lem:step} no longer  works when the
sequence $c_k$ is not monotonic. Instead we prove  the following.  

\begin{lemma}
\label{lem:nonmono}
Let $\alpha\in(0,1)\setminus\Q$, $0<a<1-\alpha$, $0<b,c<d$ 
and define
$$
f_a(t)=b\un_{(0,a)}+d\un_{(a,a+\alpha)}+c\un_{(a+\alpha,1)}.
$$
Then there exists  $ a\in (0, 1-\alpha )$ and $\xi\in\R$, such that $\hat{f_a}(\xi) =0$. 
\end{lemma}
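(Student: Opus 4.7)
The plan is to decouple $a$ from $\xi$ in the equation $\hat f_a(\xi)=0$, reduce the problem to a trigonometric identity in $\xi$ alone, find a solution of that identity with $|\xi|$ arbitrarily large via the irrationality of $\alpha$, and then solve back for $a$ inside the prescribed window.

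First I would compute the Fourier transform directly: for $\xi\ne 0$,
\[
-2\pi i\xi\,\hat f_a(\xi)= -b + (b-d)e^{-2\pi i a\xi}+(d-c)e^{-2\pi i(a+\alpha)\xi}+c\,e^{-2\pi i\xi},
\]
and factoring out $e^{-2\pi i a\xi}$ shows that $\hat f_a(\xi)=0$ is equivalent to
\[
e^{-2\pi i a\xi}=\frac{b-c\,e^{-2\pi i\xi}}{(d-c)e^{-2\pi i\alpha\xi}-(d-b)}.
\]
Since the left-hand side is unimodular, a real $a$ satisfying this equation exists if and only if the right-hand side has modulus $1$. Expanding $|\cdot|^2$ on each side and using $(d-b)^2+(d-c)^2-b^2-c^2=2d(d-b-c)$, this modulus condition simplifies to
\[
h(2\pi\xi)=d(d-b-c),\qquad h(\theta):=(d-b)(d-c)\cos(\alpha\theta)-bc\cos\theta.
\]

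Next I would produce arbitrarily large $\theta$ with $h(\theta)=d(d-b-c)$. Note $h(0)=(d-b)(d-c)-bc=d(d-b-c)$, which is the trivial root giving the useless value $\xi=0$. The extrema of $h$ are $\pm[(d-b)(d-c)+bc]$, and since $b,c>0$ and $(d-b)(d-c)>0$ one checks that $d(d-b-c)=(d-b)(d-c)-bc$ lies strictly in the open interval between them. Because $\alpha\notin\Q$, the orbit $\{(\theta,\alpha\theta)\bmod 2\pi:\theta\ge T\}$ is dense in $\bT^2$ for every $T\ge 0$. Selecting $\theta>T$ with $(\theta,\alpha\theta)$ close to $(\pi,0)$ yields $h(\theta)$ arbitrarily close to $(d-b)(d-c)+bc$, hence strictly above the target; selecting $(\theta,\alpha\theta)$ close to $(0,\pi)$ yields $h(\theta)$ strictly below. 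The intermediate value theorem then furnishes $\theta_n\to\infty$ with $h(\theta_n)=d(d-b-c)$, and the corresponding $\xi_n=\theta_n/(2\pi)\to\infty$ verify the modulus identity.

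For each such $\xi_n$ set $\omega_n:=(b-c\,e^{-2\pi i\xi_n})/((d-c)e^{-2\pi i\alpha\xi_n}-(d-b))$, so $|\omega_n|=1$. The numerator and denominator cannot simultaneously vanish for $\xi_n\ne 0$: a joint zero would force $b=c$, $\xi_n\in\Z$ and $\alpha\xi_n\in\Z$, hence $\xi_n\in\Z\cap\alpha^{-1}\Z=\{0\}$ by irrationality of $\alpha$; and since the moduli coincide, one side is zero iff both are. Writing $\omega_n=e^{i\psi_n}$, the equation $e^{-2\pi i a\xi_n}=\omega_n$ has real solutions $a=-(\psi_n+2\pi k)/(2\pi\xi_n)$, $k\in\Z$, forming an arithmetic progression of spacing $1/|\xi_n|$; as soon as $|\xi_n|>1/(1-\alpha)$ at least one term lies in $(0,1-\alpha)$, delivering the required pair $(a,\xi_n)$. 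The main obstacle is the middle step: the root $\theta=0$ is forced by the algebraic structure, and one must really use that $\alpha\notin\Q$ (through density on $\bT^2$ and the intermediate value theorem) to exhibit nontrivial roots of arbitrarily large modulus — otherwise the $a$-lattice would be too coarse to meet the window $(0,1-\alpha)$.
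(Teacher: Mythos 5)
Your proof is correct and follows essentially the same strategy as the paper: compute $-2\pi i\xi\hat f_a(\xi)$, factor out $e^{-2\pi i a\xi}$, impose the unimodularity condition on the ratio, reduce it to a trigonometric level-set identity strictly between the extremal values, locate arbitrarily large solutions $\xi$ using irrationality of $\alpha$, and then solve for $a$ in an arithmetic progression of spacing $1/|\xi|$ fitting inside $(0,1-\alpha)$. The one difference is cosmetic: where the paper invokes almost periodicity (citing Besicovitch) to produce the level set, you make the same point explicit via density of the Kronecker orbit $(\theta,\alpha\theta)$ on $\bT^2$ together with the intermediate value theorem.
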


\begin{proof} 
  We may assume without loss of
  generality that $d=1$ and $0<b,c<1$. Since $f_a(t) >0$ on $(0,1)$,
  clearly $\hat{f}(0)  = \int_0^1 f(t)\d t>0$. 
For $\xi\not=0$ we use  again
$I(\xi)=2\pi i\xi \hat{f}(-\xi )$. 
The computation shows that
\begin{align*}
  I(\xi) &=b\bigl(e^{2\pi i a\xi}-1\bigr)+\bigl(e^{2\pi i (a+\alpha)\xi}-e^{2\pi i a\xi}\bigr)
 +c\bigl(e^{2\pi i \xi}-e^{2\pi i(a+\alpha)\xi}\bigr)\\
 &=e^{2\pi i a\xi}\bigl((1-c)e^{2\pi i \alpha\xi}-(1-b)\bigr)+ce^{2\pi i \xi}-b.
\end{align*}

If $I(\xi)=0$, then 
\begin{equation}
  \label{eq:ba1}
\bigl|(1-c)e^{-2\pi i \alpha\xi}-(1-b)\bigr|^2=\bigl|ce^{-2\pi i \xi}-b\bigr|^2  
\end{equation}
or
$(1-c)^2+(1-b)^2-2(1-c)(1-b)\cos 2\pi\alpha\xi=c^2+b^2-2bc\cos2\pi\xi
\, .$
We obtain that  $I(\xi ) = 0$  implies the identity
\begin{equation}
\label{eq:almostperiod}
1-b-c=(1-b-c+bc)\cos 2\pi\alpha\xi-bc\cos2\pi\xi.
\end{equation}

Let $M=(1-b)(1-c)+bc = 1-b-c+2bc$. Since $M\pm (1-b-c) =
\{2(1-b)(1-c), 2bc\}$, we have $|1-b-c| < M$. 
Consider the function $\psi (\xi ) =  (1-b-c+bc)\cos
2\pi\alpha\xi-bc\cos2\pi\xi$. Clearly, $|\psi (\xi ) | \leq M$, and
more importantly, since  $\alpha\notin\Q$, 
$\psi $ is almost periodic and  thus takes every  value in $(-M,M)$
infinitely often. More precisely,
for each $t\in(-M,-M)$ there exists a uniformly discrete set $\Lambda_t$ such that, for every
$\xi\in\Lambda_t$, $(1-b-c+bc)\cos
2\pi\alpha\xi-bc\cos2\pi\xi=t$. See, e.g., \cite{Besbook}. 
Let  $\Lambda :=  \Lambda_{1-b-c}= \{ \xi \in \bR : (1-b-c+bc)\cos
2\pi\alpha\xi-bc\cos2\pi\xi=1-b-c\} $.

Exploiting \eqref{eq:ba1} further, we see that $I(\xi ) = 0, \xi \neq 0,$ implies
that either

(i)$
\frac{b-ce^{2\pi i \xi}}{(1-c)e^{2\pi i \alpha\xi}-(1-b)} = e^{i\theta }
$
for some $\theta \in \bR $, or

(ii) $(1-c)e^{2\pi i \alpha\xi}-(1-b) = b-ce^{2\pi i \xi} = 0$. This
identity is clearly  impossible when $b\neq c$. If $b=c$, then
$e^{2\pi i \alpha \xi }-1=e^{2\pi i \xi }-1=0$, which again cannot
happen because $\alpha \not \in \bQ$.

The remaining alternative (i) shows that $I(\xi )=0$ for $\xi \neq 0$, if and only
if
$$
\frac{b-ce^{2\pi i \xi}}{(1-c)e^{2\pi i \alpha\xi}-(1-b)} = e^{i\theta
} = e^{2\pi i a\xi }
\, .
$$
Given $\theta \in (0,2\pi )$ we choose $\xi \in \Lambda $ so large
that $a =\theta /\xi \in (0,1-\alpha )$. Thus we have shown that there
exist $a\in (0,1-\alpha )$ and $\xi \in \bR $ such that $\hat{f_a}(\xi
) = 0$. 

\end{proof}

The following result shows that construction of  a
zero-free \wid s  with  $g=\un _{(0,1)}$ might be rather delicate. We assume that $f$ is a
step function with jumps on $\bZ \cup \alpha \bZ $, then every \stft\
$V_gf$ with $f\in L^2(\bR )$  possesses a zero.  However, if we ``slightly'' enlarge the function space and
also consider bounded step functions,  then we can produce a zero-free
\stft\ $V_gf$. 

\begin{theorem}
\label{cor:counterex}
Let $(c_k)_{k\in\Z}\subset\R$ be a bounded positive sequence. Let $\alpha\in(0,1)\setminus\Q$ and define
the sequence $(a_k)$ by $a_{2k}=k$, $a_{2k+1}=k+\alpha$.
Let $\chi=\un_{(0,1)}$ and
\begin{equation}
  \label{eq:ab2}
\ffi=\sum_{k\in\Z} c_k\un_{(a_k,a_{k+1})}.  
\end{equation}

(i)  If $(c_k)_{k\in\Z}$ is monotonic, then $\ffi\in L^\infty(\R)$ and
$V_\chi f$ never vanishes. \\

(ii) If $\ffi\in L^p(\R)$ for $1\leq p<\infty $, then there exists $(x,\xi)\in\R^2$ such that $V_\chi f(x,\xi)=0$.
\end{theorem}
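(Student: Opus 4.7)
The plan is to reduce both parts to the two Fourier-analytic lemmas already in hand: Lemma \ref{lem:step} for the monotonic situation and Lemma \ref{lem:nonmono} for the non-monotonic one. From the identity
\[ V_\chi f(x,\xi) = e^{-2\pi i \xi x}\int_0^1 \ffi(x+s)\,e^{-2\pi i\xi s}\,\d s, \]
a zero of $V_\chi f$ at $(x,\xi)$ corresponds exactly to a vanishing Fourier coefficient of the step function $\ffi(x+\cdot)|_{(0,1)}$. Since consecutive points of $\Z \cup (\alpha+\Z)$ are separated by either $\alpha$ or $1-\alpha$, each strictly less than $1$, the window $(x,x+1)$ contains either one or two jumps of $\ffi$, producing at most three constant pieces whose values are consecutive terms of $(c_k)$.

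For part (i), I would verify the hypotheses of Lemma \ref{lem:step}: the values on the pieces are strictly monotonic because $(c_k)$ is, and at least one jump of $\ffi(x+\cdot)$ inside $(0,1)$ is irrational. The latter holds because, when two jumps are present, their difference equals $\alpha$ or $1-\alpha$, so they cannot both be rational; when only one jump is present (the case $x \in \Z \cup (\alpha+\Z)$), that single jump equals $\alpha$ or $1-\alpha$. Lemma \ref{lem:step} then guarantees the integral is nonzero for every $\xi$, and since this holds for every $x$, $V_\chi f$ never vanishes.

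For part (ii), I first note that each constant piece of $\ffi$ has length bounded below by $\min(\alpha,1-\alpha)>0$, so $\ffi \in L^p$ with $p<\infty$ forces $\sum_k c_k^p < \infty$ and in particular $c_k \to 0$ as $|k|\to\infty$. Hence $M = \max_k c_k$ is attained on a nonempty bounded set $S\subset \Z$, and I split on the internal structure of $S$. If $S$ contains two consecutive integers $k,k+1$, then $\ffi \equiv M$ on the interval $(a_k,a_{k+2})$ of length $\alpha+(1-\alpha)=1$; choosing $x=a_k$ and $\xi=1$ yields $V_\chi f(x,1) = M e^{-2\pi i x}\int_0^1 e^{-2\pi i s}\,\d s = 0$. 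Otherwise no two elements of $S$ are consecutive, so any $k \in S$ satisfies the strict local-maximum inequalities $c_{k-1}<c_k>c_{k+1}$, matching the setup of Lemma \ref{lem:nonmono} with $b=c_{k-1}$, $d=c_k$, $c=c_{k+1}$.

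To extract a zero of $V_\chi f$, I fix such a $k$ and observe that the middle piece $(a_k,a_{k+1})$ has length $\alpha$ or $1-\alpha$ depending on the parity of $k$; both are irrational, so Lemma \ref{lem:nonmono} applies (with this length playing the role of the lemma's $\alpha$) and yields $y$ in the appropriate interval and $\xi \in \R$ for which the Fourier transform of $b\,\un_{(0,y)} + d\,\un_{(y,y+\ell)} + c\,\un_{(y+\ell,1)}$ vanishes at $\xi$. Setting $x = a_k - y$, the range of $y$ provided by the lemma corresponds precisely to $x\in(a_{k-1},a_k)$ with $x+1\in(a_{k+1},a_{k+2})$, which ensures that $\ffi(x+\cdot)|_{(0,1)}$ matches this $(b,d,c)$-configuration and hence $V_\chi f(x,\xi)=0$. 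The main obstacle is the case split in part (ii): Lemma \ref{lem:nonmono} requires the strict inequality $b,c<d$, so it cannot directly handle degenerate maxima, which is why the sub-case of two consecutive maxima must be handled separately via the elementary constant-function argument.
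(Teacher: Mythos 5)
Your proposal is correct and follows the paper's overall strategy: reduce $V_\chi f(x,\cdot)$ to the Fourier transform of a three-piece step function on $(0,1)$ whose values are consecutive terms of $(c_k)$, then invoke Lemma \ref{lem:step} for the monotone case and Lemma \ref{lem:nonmono} for a local-maximum triple. But there is a genuine improvement in your treatment of part (ii) that is worth spelling out. The paper jumps from ``$(c_j)\in\ell^p$ is not monotonic'' to ``there exists $j_0$ with $c_{j_0-1},c_{j_0+1}<c_{j_0}$,'' i.e.\ a \emph{strict} local maximum. That implication is false in general: for instance $c_0=c_1=1$ and $c_k=1/\max(k,1-k)$ otherwise is positive, lies in $\ell^p$ for $p>1$, is non-monotonic, yet has no index $k$ with $c_{k-1}<c_k>c_{k+1}$ (the peak is flat). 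Lemma \ref{lem:nonmono} requires the strict inequalities $0<b,c<d$, so this case cannot be pushed through it. You handle it cleanly: since $(c_k)\in\ell^p$ forces $c_k\to 0$, the supremum $M$ is attained on a nonempty finite set $S$; if $S$ contains two consecutive indices $k,k+1$, then because $a_{k+2}-a_k=\alpha+(1-\alpha)=1$ for every $k$, the function $\ffi$ is constant on the unit window $(a_k,a_k+1)$ and $V_\chi f(a_k,1)=0$ by the elementary vanishing of $\int_0^1 e^{-2\pi i s}\,ds$; otherwise no two indices of $S$ are adjacent, so every $k\in S$ is a strict local maximum and Lemma \ref{lem:nonmono} applies exactly as in the paper. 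This dichotomy is exhaustive, and your bookkeeping ($\ell=a_{k+1}-a_k\in\{\alpha,1-\alpha\}$ irrational, $x=a_k-y$, $y\in(0,1-\ell)$) matches the ranges in the lemma. For part (i) you, like the paper, read ``monotonic'' as ``strictly monotonic,'' which is in fact necessary: any tie $c_k=c_{k+1}$ again produces a constant unit window and hence a zero at $\xi=1$.
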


\begin{proof}
Clearly, for $1\leq p\leq \infty$,  $\ffi \in L^p(\bR )$, if and only if  $(c_k)\in \ell ^p(\bZ
)$.  We write
$$
\ffi=\sum_{k\in\Z} \bigl(c_{2k}\un_{(k,k+\alpha)}
+c_{2k+1}\un_{(k+\alpha,k+1)}\bigr) \, .
$$
Then the  function $\ffi (t) \chi (t-x)$ is a step function consisting of  three
terms. To describe the correct terms, we write  $x=j+u$ with $j\in\Z$
and $u\in[0,1)$ and distinguish two cases.

\noindent{\bf Case 1.}  If $u\in [0,\alpha)$, then 
\begin{multline}
  V_\chi \ffi(x,\xi)\\
	= \int _\bR \Big( c_{2j} \un _{(j+u,j+\alpha)} (t) + c_{2j+1} \un _{(j+\alpha,j+1)}(t) +
      c_{2j+2} \un _{(j+1,j+1+u)}(t)\Big) \, e^{-2\pi i  t \xi } \d t
                       \\
    = e^{-2\pi i (j+u)\xi } \, \int_0^1\bigl(c_{2j}\un_{(0,\alpha-u)}(t)+c_{2j+1}\un_{(\alpha-u,1-u)}(t)
+c_{2j+2}\un_{(1-u,1)}(t)\bigr)e^{-2\pi i t\xi}\d t. \label{eq:u0alpha}
\end{multline}
We observe right away that at least one of the endpoints $\alpha -u$
or $1-u$ must be irrational, since $\alpha  \not \in \bQ $. 

\noindent{\bf Case 2.} If  $u\in[\alpha,1)$, then
\begin{multline}
  V_\chi \ffi(x,\xi)  \\
	= \int _\bR \Big( c_{2j+1} \un _{(j+u,j+1)}(t) +
                       c_{2j+2} \un _{(j+1, j+1+\alpha )}(t) +
                       c_{2j+2} \un_{(j+1+\alpha , j+1+u)}(t) \big) \,
                       e^{-2\pi i \xi t} \d t \\ 
= e^{-2\pi i \xi (j+u)}\, \int_0^1\bigl(c_{2j+1}\un_{(0,1-u)}(t)+c_{2j+2}\un_{(1-u,1-u+\alpha)}(t)
+c_{2j+3}\un_{(1-u+\alpha,1)}(t)\bigr)e^{-2\pi i t\xi}\d
t. \label{eq:ualpha1} 
\end{multline}
Again, at least one of the endpoints 
$1-u$ and $1-u+\alpha$ must be irrational, because $\alpha \not \in
\bQ $.

Proof of (i):  If the coefficient sequence  $(c_j)$ is monotonic, then
 Lemma \ref{lem:step} is applicable, and 
 we conclude that $V_\chi \ffi(x,\xi)\not=0$ for all  $\xi\in \bR $
 and all $x\in \bR$. 

 Proof of (ii): Assume now that $\ffi\in L^p(\R)$ for $p< \infty$,
 then $(c_j) \in \ell ^p$ cannot be monotonic.  Thus there exists
 $j_0$ such that either  $c_{2j_0},c_{2j_0+2}<c_{2j_0+1}$ or
 $c_{2j_0+1},c_{2j_0+3}<c_{2j_0+2}$.
 In the former case we choose $x=j_0 + u$ for $u\in (0,\alpha )$ (to be
 determined in a moment) and we work with \eqref{eq:u0alpha}.  In the latter case we choose $x=j_0 + u$ for
 $u\in (\alpha,1 )$ and work with \eqref{eq:ualpha1}. In both cases
 the length of the middle interval $(\alpha -u, 1-u)$ and
 $(1-u,1-u+\alpha)$ is irrational. 
According to Lemma \ref{lem:nonmono}, there exists suitable $u$ and
$\xi \in \bR $ such that
$V_\chi \ffi(j_0+u,\xi)=0$.
\end{proof}

\subsection*{Acknowledgement} The authors would like to thank the Erwin
Schr\"odinger Institute for its  hospitality and
great conditions during the work on this paper. Special thanks go to
Martin Ehler for his help with Mathematica and his helpful
discussions. We are also grateful to Andrii Bondarenko, who showed Example \ref{ex:Gamma} to us.

P.J. acknowledges that this study has been carried out with financial support from the French State, managed
by the French National Research Agency (ANR) in the frame of the ``Investments for
the future'' Programme IdEx Bordeaux - CPU (ANR-10-IDEX-03-02).

E.M. is supported  by  Project 275113
of the Research Council of Norway and NSF grant no. DMS-1638352

\def\cprime{$'$}


\end{document}